\newtheorem{theorem}{Theorem}[section]
\newtheorem{corollary}[theorem]{Corollary}
\newtheorem{lemma}[theorem]{Lemma}
\newtheorem{proposition}[theorem]{Proposition}
\theoremstyle{definition}
\newtheorem{definition}[theorem]{Definition}
\newtheorem{remark}[theorem]{Remark}
\numberwithin{equation}{section}
\newcommand{\RR}{\mathbb{R}}
\newcommand{\bP}{\mathbf{P}}
\newcommand{\bL}{\mathbf{L}}
\newcommand{\eps}{\varepsilon}
\newcommand{\bfW}{\mathbf{W}}
\newcommand {\ignore}[1]  {}
\newcommand{\btheta}{{\boldsymbol{\theta}}}
\begin{document}

\title{Hausdorff and packing dimension of Diophantine sets}
\author{Antoine Marnat}
\date{}
%\abstract{}
\dedicatory{to Patrick Berguin}

\maketitle

Using the variational principle in parametric geometry of numbers, we compute the Hausdorff and packing dimension of Diophantine sets related to exponents of Diophantine approximation, and their intersections. In particular, we extend a result of Jarn\'ik and Besicovitch to intermediate exponents.\\

\paragraph{{\bf Aknowledgment}} The author is supported by Austrian Science Fund (FWF), Project I 3466-N35 and EPSRC Programme Grant EP/J018260/1.

\section{Exponents of Diophantine approximation and Diophantine sets}
Throughout this paper, the integer $n\geq1$ denotes the dimension of the ambient space. We denote by $\btheta=(\theta_1, \ldots , \theta_n)$ an $n$-tuple of real numbers such that $1,\theta_1, \ldots , \theta_n$ are $\mathbb{Q}$-linearly independent.\\

We first define exponents of Diophantine approximation, and some of their properties. Then, we define their related Diophantine sets, and state our theorems. In Section \ref{PGN}, we shortly describe the theory of \emph{parametric geometry of numbers}, and in particular the \emph{variational principle}. The latter is the main tool for the proofs of Section \ref{proofs}.

\subsection{Exponents of Diophantine approximation} Let $d$ be an integer with $0\leq d \leq n-1$. We define the \emph{ordinary exponent} ${\omega}_{d}(\btheta)$  (resp. the \emph{uniform exponent} $\hat{\omega}_{d}(\btheta)$) as the supremum of the real numbers $\omega$ for which there exist  rational affine subspaces $L \subset \mathbb{R}^{n}$ such that 
 \[ \dim(L)=d \; ,  \; H(L)\leq H \; \textrm{ and } \; H(L)d(\btheta,L) \leq H^{-\omega}  \]
  for arbitrarily large real numbers $H$  (resp. for every sufficiently large real number $H$). Here $H(L)$ denotes the height of $L$ (see \cite{SchH} for more details), and $d(\btheta,L)=\min_{P\in L} d(\btheta,P)$ is the minimal distance between $\btheta$ and a point of $L$. \\

These exponents were introduced originally by Laurent \cite{MLwd} and already studied implicitly by Schmidt \cite{SchH}. 
They interpolate between the classical exponents $\omega(\btheta)=\omega_{n-1}(\btheta)$ and $\lambda(\btheta)=\omega_0(\btheta)$ (resp. $\hat{\omega}(\btheta)=\hat{\omega}_{n-1}(\btheta)$ and $\hat{\lambda}(\btheta)=\hat{\omega}_0(\btheta)$) that were  introduced by Khintchine \cite{Khin1,Khin2}, Jarn\'ik \cite{JAR} and Bugeaud and Laurent \cite{BugLau,BugLau2}. Hence, for $1\le d \le n-2$ we sometime call these exponents the \emph{intermediate exponents}.\\
 
 We have the straightforward relations
 \[ \omega_{0}(\btheta) \leq \omega_{1}(\btheta) \leq \cdots \leq \omega_{n-1}(\btheta) ,  \]
 \[\hat{\omega}_{0}(\btheta)\leq \hat{\omega}_{1}(\btheta) \leq \cdots \leq \hat{\omega}_{n-1}(\btheta),\]
 and Minkowski's First Convex Body Theorem \cite{Mink} and Mahler's compound convex bodies theory provides the lower bounds
 
 \begin{equation*}
 \omega_{d}(\btheta) \geq \hat{\omega}_{d}(\btheta) \geq \frac{d+1}{n-d}, \; \textrm{  for  } 0 \leq d \leq n-1 .
 \end{equation*}
 
 Khintchine \cite{Khin2} was the first to notice that these exponents are related, with his transference principle between $\omega_0$ and $\omega_{n-1}$.
 In \cite{MLwd}, Laurent provides a splitting of \emph{Khintchine's Transference Principle} in terms of the intermediate exponents $\omega_d$. In \cite{Royspec}, Roy shows that it is optimal.

\begin{theorem}[Laurent -- Roy]\label{GUGD}
Let $n\geq2$, for all $n$-tuple of real numbers $\btheta=(\theta_{1}, \ldots, \theta_{n})$ with $1, \theta_{1}, \ldots  , \theta_{n}$ $\mathbb{Q}$-linearly independent, the exponents satisfy \[\omega_{0}(\btheta)\geq \frac{1}{n}\] and the following relations for $1\leq d \leq n-1$:
\begin{equation}\label{gugd}  \frac{d\omega_{d}(\btheta)}{\omega_{d}(\btheta)+d+1} \leq \omega_{d-1}(\btheta) \leq \frac{(n-d)\omega_{d}(\btheta)-1}{n-d+1},\end{equation}
where the left hand side is $d$ if $\omega_{d}(\btheta)=+\infty$.\\
Furthermore, for all $n$-tuple $\omega_{0}, \ldots, \omega_{n-1} \in [0, +\infty]$, satisfying $\omega_{0}\geq \frac{1}{n}$ and
\[  \frac{d\omega_{d}}{\omega_{d}+d+1} \leq \omega_{d-1} \leq \frac{(n-d)\omega_{d}-1}{n-d+1},\]
there exists a $n$-tuple $\btheta=(\theta_{1}, \ldots, \theta_{n})$ of real numbers with $1, \theta_{1}, \ldots , \theta_{n}$ $\mathbb{Q}$-linearly independent such that for $0\leq d \leq n-1$, one has \[\omega_{d}(\btheta)=\omega_{d}.\]
\end{theorem}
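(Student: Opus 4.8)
The two halves of the statement are Laurent's system of transference inequalities and, for the sharpness, Roy's construction; the plan is to derive both within the parametric geometry of numbers — the language of Section~\ref{PGN} — so that they become, respectively, a membership and a surjectivity assertion for one and the same ``template polytope''.

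\emph{Step 1: from exponents to successive minima.} Put $\bu=(1,\theta_1,\dots,\theta_n)$. Homogenising, a rational affine subspace $L\subset\RR^n$ with $\dim L=d$ corresponds to a rational linear subspace $\widetilde L\subset\RR^{n+1}$ of dimension $d+1$, with $H(L)\asymp\mathrm{covol}(\widetilde L\cap\ZZ^{n+1})$ and $H(L)\,d(\btheta,L)\asymp\|\bv_1\wedge\dots\wedge\bv_{d+1}\wedge\bu\|$ for an integral basis $\bv_1,\dots,\bv_{d+1}$ of $\widetilde L\cap\ZZ^{n+1}$. Hence $\omega_d(\btheta)$ records how small a decomposable integer $(d+1)$-vector can be made against $\bu$ at bounded norm, so it is governed by the first successive minimum of the $(d+1)$-st compound of the standard one-parameter family of convex bodies $\mathcal D_\bu(Q)$ attached to $\bu$. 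By Mahler's theory of compound convex bodies this first minimum equals, up to a factor depending only on $n$, the product $\lambda_1(Q)\cdots\lambda_{d+1}(Q)$ of the $d+1$ smallest successive minima of $\mathcal D_\bu(Q)$ itself. Writing $L_i(q)=\log\lambda_i(e^q)$ and $S_k(q)=L_1(q)+\dots+L_k(q)$, this makes $\omega_d(\btheta)$ an explicit, decreasing, fractional-linear function of $\liminf_{q\to\infty}S_{d+1}(q)/q$, while Minkowski's second convex body theorem supplies the normalisation $S_{n+1}(q)=O(1)$ together with the ordering $L_1\le\dots\le L_{n+1}$.

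\emph{Step 2: the variational principle.} By the characterisation of the possible families $(L_1,\dots,L_{n+1})$ (up to bounded error) as the abstract templates, or $(n+1)$-systems, both halves reduce to one statement: the image of the template set under $\btheta\mapsto(\omega_0(\btheta),\dots,\omega_{n-1}(\btheta))$ is exactly the region cut out by $\omega_0\ge 1/n$ and \eqref{gugd}. The forward inequalities \eqref{gugd} hold for every template because the ordering $L_1\le\dots\le L_{n+1}$ forces $S_{d+1}/(d+1)\ge S_d/d$ and, using $S_{n+1}=O(1)$, also $S_{d+1}\le\tfrac{n-d}{n-d+1}S_d+O(1)$; dividing by $q$, passing to the limit, and feeding these slope relations through the fractional-linear formulas of Step~1 returns precisely the two inequalities linking $\omega_{d-1}$ and $\omega_d$, while $\omega_0(\btheta)\ge 1/n$ is the trivial $\omega_0\ge\hat\omega_0\ge 1/n$ of Minkowski's first theorem recalled in the introduction. (The inequalities also admit Laurent's original self-contained derivation, by extracting a $(d-1)$-dimensional approximant from a $d$-dimensional one and dually a $d$-dimensional one from a $(d+1)$-dimensional one, each by a Minkowski argument; the parametric route has the advantage of delivering the sharpness at the same time.) Conversely, given a tuple in the region I would solve for the target slope data, build a self-similar template — eventually periodic, with suitably controlled oscillation where $\omega_d\ne\hat\omega_d$ is wanted — realising those slopes, \eqref{gugd} being exactly the condition that the solved-for values assemble into an ordered, normalised template, and then invoke the variational principle to obtain the desired $\btheta$.

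\emph{Main obstacle.} The real work is the converse (Roy's optimality): \eqref{gugd} is a priori only a family of pairwise conditions on consecutive exponents, and one must show it is jointly sufficient, i.e.\ that a single template simultaneously realises all $n$ prescribed values $\omega_d$. This is the combinatorial heart of the matter — parametrise templates by their slope vectors and verify that the chained inequalities propagate to a globally consistent, order-preserving configuration, with $\omega_0\ge 1/n$ closing the induction and the degenerate cases ($\omega_d=+\infty$, or equality in \eqref{gugd}) handled by passing to limiting templates. A minor technical point is to make the compound-body comparison of Step~1 uniform over the relevant range of $Q$, so that its bounded factors do not affect the exponents; this is routine, since Diophantine exponents are unaffected by bounded multiplicative perturbations.
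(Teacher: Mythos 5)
This statement is not proved in the paper: it is quoted as background, with the inequalities attributed to Laurent \cite{MLwd} and their optimality to Roy \cite{Royspec}. So there is no internal proof to compare against; I can only assess your plan on its own terms and against the machinery the paper does set up.

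Your route is the right one, and it is the modern (Schmidt--Summerer/Roy) route rather than Laurent's original geometric one. The forward half, as you sketch it, genuinely closes: granting the dictionary of Proposition~\ref{prop} (which is the real content of your Step~1 and is itself a nontrivial compound-body argument, but one the paper imports wholesale), the two pointwise facts you isolate --- $S_{k+1}/(k+1)\ge S_k/k$ from the ordering, and $S_{k+1}\le\bigl((n-k)S_k+q\bigr)/(n-k+1)$ from the ordering plus the normalisation $\sum_i P_i=q$ --- pass through $\liminf$ (both right-hand sides are increasing affine functions of $S_k/q$) and, after the substitution $d=n-k$ in $\liminf S_k/q=1/(1+\omega_{n-k})$, yield exactly the going-down and going-up inequalities of \eqref{gugd}; $\omega_0\ge 1/n$ is indeed just Minkowski. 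The gap is in the converse, which you correctly identify as the heart of the matter but leave entirely at the level of intention. Two things are missing there. First, the explicit template: you should write down the parameters $a_1=\tfrac{1}{1+\omega_{n-1}}$, $a_d=\tfrac{1}{1+\omega_{n-d}}-\tfrac{1}{1+\omega_{n-d+1}}$, $a_{n+1}=\tfrac{\omega_0}{1+\omega_0}$ and observe that \eqref{gugd} is \emph{equivalent} to $0\le a_1\le\cdots\le a_{n+1}$ with $\sum a_d=1$ --- this is precisely the ``globally consistent, order-preserving configuration'' you ask for, and it is exhibited (for a different purpose) in Section~\ref{proofs} of the paper. Second, and this is where Roy's actual work lies, one must verify that the resulting self-similar Roy-system attains $\liminf_{q}S_k(q)/q=a_1+\cdots+a_k$ \emph{simultaneously for every} $k$, i.e.\ locate the minima of each $S_k(q)/q$ over a period and check they give the prescribed values (the periodicity guarantees the liminfs exist along the right subsequence, but the value at the minimum still has to be computed), before Theorem~\ref{DR} can be invoked; the degenerate cases $\omega_d=+\infty$ then need the limiting-template device you mention. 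Without these two verifications the converse is a plausible programme, not a proof.
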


In particular, if the equality $\omega_d = \frac{d+1}{n-d}$ holds for some $0\le d \le n-1$, it holds for all $0\le d \le n-1$. Following the denomination of \cite{MLwd} and \cite{SchH}, we call \emph{going up} and \emph{going down} the left and right hand-side of \eqref{gugd}.\\

\subsection{Diophantine sets}

For $0\leq d \leq n-1$, we define the \emph{Diophantine sets} $\bfW_n^d({\omega}_{d})$ and $\bfW_n^d({\omega}_{d})^*$ related to $\omega_d$  by 
\begin{eqnarray*}
\bfW_n^d({\omega}_{d})&=&\{\btheta\in \RR^n | {\omega}_{d}(\btheta)\geq{\omega}_{d}  \},\\
\bfW_n^d({\omega}_{d})^*&=&\{\btheta\in \RR^n | {\omega}_{d}(\btheta)={\omega}_{d}  \}.\\
\end{eqnarray*}

We are going to consider the Hausdorff and packing dimensions of these sets and their intersections. Hausdorff dimension was introduced by F. Hausdorff \cite{Haus}, while the packing dimension was introduced by Tricot \cite{Tric}. For basics and more on these notions, see \cite{Falc} and \cite{BisPer}.\\

Jarn\'ik \cite{JarH} and Besicovitch \cite{Bes} obtained the Hausdorff dimension of Diophantine sets when $d=0$ and $d=n-1$.

\begin{theorem}[Jarn\'ik -- Besicovitch]\label{JB}
For $n\ge 1$, we have
\begin{eqnarray*}
\dim_H(\bfW_n^0({\omega}_{0})^*) &=& \dim_H(\bfW_n^0({\omega}_{0})) \quad = \quad \frac{n+1}{1+{\omega}_{0}},\\
\dim_H(\bfW_n^{n-1}({\omega}_{{n-1}})^*) &=& \dim_H(\bfW_n^{n-1}({\omega}_{{n-1}})) \quad = \quad {n-1}+ \frac{n+1}{1+{\omega}_{{n-1}}}.
\end{eqnarray*}
\end{theorem}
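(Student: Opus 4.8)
The plan is the classical two-sided estimate for $\limsup$ sets, since in both rows the set is assembled from neighbourhoods of rational affine subspaces: an upper bound by a covering (Hausdorff--Cantelli) argument, and a lower bound by a mass transference (ubiquity) argument. The two rows are mutually \emph{dual} — for $d=0$ one works with balls around rational points $\mathbf p/q$, for $d=n-1$ with slabs around rational hyperplanes — and it suffices to work inside a fixed cube $K\subset\RR^n$, using translation- and coarse scale-invariance.

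For the \emph{upper bounds} I would fix $\eps>0$ and unwind the definition. If $\btheta\in\bfW_n^0(\omega_0)\cap K$ then, since $\dim L=0$ and $H(\mathbf p/q)\asymp q$, there are infinitely many $q$ with $|\btheta-\mathbf p/q|\le q^{-1-\omega_0+\eps}$ for some $\mathbf p$; hence for every dyadic $Q$ the set is covered by the $\asymp Q^{n+1}$ balls of radius $\asymp Q^{-1-\omega_0+\eps}$ around the rationals of $K$ with denominator in $[Q,2Q]$, and the $s$-dimensional sum $\sum_Q Q^{n+1}Q^{-s(1+\omega_0-\eps)}$ converges once $s>\frac{n+1}{1+\omega_0-\eps}$, giving $\dim_H\bfW_n^0(\omega_0)\le\frac{n+1}{1+\omega_0}$ after $\eps\to0$. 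The case $d=n-1$ is the same with rational hyperplanes: there are $\asymp Q^{n+1}$ of height in $[Q,2Q]$ meeting $K$, the $Q^{-1-\omega_{n-1}+\eps}$-neighbourhood of each is a slab needing $\asymp Q^{(n-1)(1+\omega_{n-1}-\eps)}$ balls of that radius, and $\sum_Q Q^{\,n+1+(n-1-s)(1+\omega_{n-1}-\eps)}$ converges precisely when $s>n-1+\frac{n+1}{1+\omega_{n-1}-\eps}$. These bounds descend a fortiori to the starred sets.

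For the \emph{lower bounds} the input is a full-measure $\limsup$ statement from Dirichlet's theorem, to which I would apply the Mass Transference Principle of Beresnevich--Velani. For $d=0$: Dirichlet gives $|\btheta-\mathbf p/q|\le q^{-1-1/n}$ for infinitely many $q$ and every admissible $\btheta$ (here the $\QQ$-linear independence of $1,\theta_1,\dots,\theta_n$ rules out degeneracy), so $\limsup B(\mathbf p/q,q^{-1-1/n})$ is full in $K$; transferring to the balls $B(\mathbf p/q,q^{-1-\omega_0})$ — whose $\limsup$ lies in $\bfW_n^0(\omega_0)$ because $q\cdot q^{-1-\omega_0}=q^{-\omega_0}$ — with the exponent $s$ fixed by $(1+\omega_0)\,s/n=1+1/n$, i.e. $s=\frac{n+1}{1+\omega_0}$, yields $\mathcal H^s(\bfW_n^0(\omega_0)\cap K)=\infty$. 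For $d=n-1$ I would run the analogue for neighbourhoods of hyperplanes (the mass transference principle for systems of linear forms): Dirichlet for linear forms supplies the full-measure family $\limsup\Delta(L,H(L)^{-n-1})$, the target family is $\Delta(L,H(L)^{-1-\omega_{n-1}})$, and the codimension-one scaling $H(L)^{-1-\omega_{n-1}}=(H(L)^{-n-1})^{1/(s-(n-1))}$ gives $s=n-1+\frac{n+1}{1+\omega_{n-1}}$ (with $s\le n$, as forced by $\omega_d\ge\frac{d+1}{n-d}$). For the starred sets, write $\bfW_n^d(\omega_d)^*=\bfW_n^d(\omega_d)\setminus\bigcup_{k\ge1}\bfW_n^d(\omega_d+\tfrac1k)$; by the upper bound each removed set has Hausdorff dimension strictly below $s=\dim_H\bfW_n^d(\omega_d)$, hence is $\mathcal H^s$-null, so subtracting from $\mathcal H^s(\bfW_n^d(\omega_d))=\infty$ still leaves infinite $\mathcal H^s$-measure, and the matching upper bound is automatic.

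The covering bounds are routine lattice-point counting; the real content is on the lower-bound side. I expect the main obstacle to be the hyperplane case $d=n-1$, which requires the mass transference principle for neighbourhoods of affine subspaces rather than for balls — appreciably more delicate than the classical version — together with the correct linear-forms Dirichlet input. An alternative, and the route ultimately taken in this paper, is to obtain both rows a posteriori as the $d=0$ and $d=n-1$ specialisations of the general Hausdorff-dimension formula for $\bfW_n^d(\omega_d)$ proved below via the variational principle in parametric geometry of numbers.
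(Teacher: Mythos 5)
Your proposal is correct in outline, but it takes a genuinely different route from the paper: the paper does not prove Theorem \ref{JB} directly at all --- it cites Jarn\'ik and Besicovitch for it, and then recovers it a posteriori as the $d=0$ and $d=n-1$ specialisations of Theorem \ref{Hwd}, which is established via the Das--Fishman--Simmons--Urba\'nski variational principle (explicit Roy-systems with prescribed lower average contraction rate for the lower bound, and a potential-function inequality $\Phi'(q)\ge\delta(I)$ for the upper bound). Your route is the classical one: the dyadic covering sums give the right convergence exponents in both rows, the mass-transference scalings $s(1+\omega_0)/n=1+1/n$ and $(1+\omega_{n-1})(s-(n-1))=n+1$ are correct, and your reduction of the starred sets to the unstarred ones by deleting the $\mathcal{H}^s$-null sets $\bfW_n^d(\omega_d+\frac1k)$ is exactly the standard device. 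What your route buys is independence from parametric geometry of numbers: for $d=0$ it needs only the ball form of the Beresnevich--Velani Mass Transference Principle, and for $d=n-1$ the linear-forms/slicing variant, both off the shelf. What it does not buy is the rest of the paper: for intermediate $0<d<n-1$ the lower bound would require ubiquity for neighbourhoods of rational $d$-planes, which is substantially harder, and it gives no access to packing dimension or to intersections over several $d$ simultaneously --- precisely the gaps the variational principle closes. Two small points you should make explicit: the paper's definition of $\omega_d$ uses a free parameter $H\ge H(L)$, and reducing it to the ``infinitely many $L$ with $H(L)\,d(\btheta,L)\le H(L)^{-\omega}$'' formulation you implicitly use requires observing that $\btheta\notin L$ (guaranteed by the $\QQ$-linear independence of $1,\theta_1,\dots,\theta_n$), so that a single subspace cannot serve for arbitrarily large $H$; and the degenerate case $\omega_d=\infty$ needs a separate one-line treatment since your exponent-matching breaks down there.
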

Note that Bovey and Dodson extended the result to the case of matrices \cite{Bodod}.\\

We now provide new results concerning Hausdorff and packing dimensions of Diophantine sets,  extending Theorem \ref{JB} to the intermediate exponents.

\begin{theorem}\label{Hwd}
For any $0\le d \le n-1$, the Hausdorff dimension of the Diophantine set $\bfW_n^d({\omega}_{d})$ is
\[ \dim_H\left(\bfW_n^d({\omega}_{d})^*\right) = \dim_H\left(\bfW_n^d({\omega}_{d})\right) = d+ \frac{n+1}{1+{\omega}_{d}}.\]
 In particular, the Hausdorff dimension is full if and only if ${\omega}_d(\btheta) = \frac{d+1}{n-d}$.
\end{theorem}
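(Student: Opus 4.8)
The plan is to deduce Theorem \ref{Hwd} from the variational principle in parametric geometry of numbers, which is the announced main tool. Recall that, after the work of Schmidt--Summerer and Roy, the exponents $\omega_d(\btheta)$ are encoded by the asymptotic behaviour of the successive minima functions attached to the one-parameter family of convex bodies $C_\btheta(Q)$, and the variational principle identifies the set of attainable combined growth rates with an explicitly described polytope of piecewise-linear ``templates''. Concretely, one works with the $n+1$ functions $L_1\le\dots\le L_{n+1}$ governing the logarithmic successive minima; the exponent $\omega_d(\btheta)$ is a prescribed linear functional of the lower/upper asymptotic slopes of the partial sums $L_1+\dots+L_{d+1}$. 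The first step is therefore to translate the constraint $\omega_d(\btheta)\ge\omega_d$ into a constraint on the template, i.e. into a condition on the lower limit of a sum of slopes, and to record that the Hausdorff and packing dimensions of the associated Diophantine set are computed by a variational (sup over admissible templates) formula of the type established in the Das--Fishman--Simmons--Urba\'nski framework (or its predecessors for the classical $d=0,n-1$ cases).

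Next I would carry out the optimization. The dimension formula should read as $n$ minus a ``contraction defect'' that measures how much the template is forced to deviate from the trivial (diagonal) template in order to meet the $\omega_d\ge\omega_d$ constraint; dually it can be phrased as $\sup$ over admissible templates of an affine functional. The heart of the matter is the following finite-dimensional linear-programming problem: among all sequences of slopes satisfying the template conditions (each slope in $[-1,\ldots]$ in the appropriate normalization, partial sums nondecreasing, mean zero, and the $(d+1)$-st partial sum achieving lower slope $\le -\text{something}$ forcing $\omega_d\ge\omega_d$), find the one maximizing the relevant dimension functional. I expect the extremal template to be the simplest possible: only the first $d+1$ coordinates are ``active'' (they split into a group that dives down and the rest that compensate), giving the value $d+\frac{n+1}{1+\omega_d}$ after a short computation that mirrors the Jarn\'ik--Besicovitch count. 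One then checks this candidate is optimal by an exchange/convexity argument on the LP, and that the same extremizer works for both Hausdorff and packing dimension, whence the two dimensions coincide.

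To finish, I would address the two set-theoretic refinements. The inequality $\dim_H(\bfW_n^d(\omega_d)^*)\le\dim_H(\bfW_n^d(\omega_d))$ is trivial since $\bfW_n^d(\omega_d)^*\subset\bfW_n^d(\omega_d)$; for the reverse, note $\bfW_n^d(\omega_d)=\bigcup_{\omega\ge\omega_d}\bfW_n^d(\omega)^*$, and since the variational formula for $\dim_H\bfW_n^d(\omega)^*$ is (as the computation will show) nonincreasing and continuous in $\omega$, the union over $\omega\ge\omega_d$ has the same Hausdorff dimension as the single level set at $\omega=\omega_d$ — this is the standard ``the dimension of a monotone union is the sup'' argument, using countable stability of Hausdorff dimension along a sequence $\omega_k\downarrow\omega_d$. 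The last sentence of the theorem is then immediate: $d+\frac{n+1}{1+\omega_d}=n$ exactly when $\omega_d=\frac{d+1}{n-d}$, which by Theorem \ref{GUGD} (and Minkowski) is precisely the minimal value of $\omega_d$, so fullness of dimension characterizes the Diophantine-generic case.

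The main obstacle, I expect, is the precise setup linking the Diophantine set to a dimension-theoretic variational formula: one must verify that the general machinery (which computes dimensions of sets defined by asymptotic growth conditions on successive minima) applies verbatim to the affine-subspace formulation of $\omega_d$, handling the passage between ``approximation by rational subspaces of dimension $d$'' and the dual ``$(d+1)$-st compound'' picture, and that the resulting template polytope is exactly the one whose optimum is computed above. Once that dictionary is in place, the optimization itself is a routine — if slightly delicate — linear-programming exercise, and the monotonicity/continuity in $\omega_d$ needed for the $\bfW^*$ versus $\bfW$ comparison comes for free from the explicit formula.
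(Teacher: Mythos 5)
Your overall strategy (encode $\omega_d(\btheta)$ via Proposition \ref{prop} as a $\liminf$ condition on the partial sum $P_1+\cdots+P_{n-d}$ of a Roy-system, then optimize the average contraction rate over admissible templates using the variational principle) is the paper's strategy. But there are two genuine gaps in the optimization step. First, your claim that ``the same extremizer works for both Hausdorff and packing dimension, whence the two dimensions coincide'' is false: by Theorem \ref{Pcap} the packing dimension of $\bfW_n^d(\omega_d)$ is $n$, not $d+\frac{n+1}{1+\omega_d}$. The asymmetry between $\underline{\delta}$ ($\liminf$) and $\overline{\delta}$ ($\limsup$) is essential here, and it also undermines your framing of the lower bound as a finite-dimensional LP with a periodic extremizer. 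A template that simply repeats the ``Diophantine pattern'' periodically has a strictly smaller $\liminf$ of the average contraction rate than $d+\frac{n+1}{1+\omega_d}$, because the running average never fully recovers from the stretches of local rate $d$. The paper's construction instead interleaves the pattern forcing $\omega_d(\btheta)=\omega_d$ with diagonal stretches ($P_1=\cdots=P_{n+1}=q/(n+1)$, local rate $n$) of relative length $1/\eps\to\infty$, so the pattern occurs with vanishing frequency; only in the limit $\eps\to 0$ does $\underline{\delta}(\bP_\eps)$ approach $d+\frac{n+1}{1+\omega_d}$. Your sketch does not contain this idea, and without it the lower bound fails.

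Second, your treatment of $\bfW_n^d(\omega_d)$ versus $\bfW_n^d(\omega_d)^*$ via ``countable stability along a monotone union'' does not work as stated: $\bfW_n^d(\omega_d)=\bigcup_{\omega\ge\omega_d}\bfW_n^d(\omega)^*$ is an uncountable union, and countable stability of Hausdorff dimension does not apply to it. The paper avoids this entirely: the upper bound is proved directly for every template satisfying $\liminf_q (P_1+\cdots+P_{n-d})(q)/q\le\frac{1}{1+\omega_d}$ (i.e.\ for all of $\bfW_n^d(\omega_d)$), by exhibiting a potential function $\Phi(q)=dq+(n+1)(P_1+\cdots+P_{n-d})(q)$ whose derivative dominates the local contraction rate on every interval of linearity; integrating and passing to the $\liminf$ gives $\underline{\delta}(\bP)\le d+\frac{n+1}{1+\omega_d}$ with no need to stratify by the exact value of the exponent. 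You would need either this potential-function argument or a correct countable decomposition to close the upper bound for the superlevel set.
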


For $d=0$ and $d=n-1$ this is Theorem \ref{JB}.\\

It appears that the packing dimension of Diophantine sets is full in a strong sense.

\begin{theorem}\label{Pcap}
Fix the dimension $n\ge 2$, choose a $n$-tuple of real numbers $\omega_{0}, \ldots , \omega_{n-1}$ satisfying the \emph{going-up} and \emph{going down} relations \eqref{gugd}. Then
\begin{eqnarray}%\notag
%\dim_P\left( \bigcap_{d=0}^{n-1} \bfW_n^d({\omega}_{d})^* \cap \hat{\bfW}_n^{n-1}(n)^*  \right) &=& \dim_P\left( \bigcap_{d=0}^{n-1} \bfW_n^d({\omega}_{d}) \cap \hat{\bfW}_n^{n-1}(n)^* \right) \\
%=
\dim_P\left( \bigcap_{d=0}^{n-1} \bfW_n^d({\omega}_{d})^*\right) &=& \dim_P\left( \bigcap_{d=0}^{n-1} \bfW_n^d({\omega}_{d}) \right) =n.
\end{eqnarray}
In particular, $ \dim_P(\bfW_n^d({\omega}_{d})^*) = \dim_P(\bfW_n^d({\omega}_{d}))=n$.
\end{theorem}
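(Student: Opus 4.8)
The plan is to reduce the packing dimension statement to a Hausdorff dimension computation for a countable union of carefully chosen subsets, exploiting the classical fact that the packing dimension of a set $E$ equals $\inf\{\sup_i \dim_H E_i : E \subseteq \bigcup_i E_i\}$ (the modified upper box counterpart), so that it suffices to show that $\bigcap_{d=0}^{n-1}\bfW_n^d(\omega_d)^*$ cannot be covered by countably many sets of Hausdorff dimension $<n$. Equivalently, I would show that for every $\eps>0$ this intersection contains a subset of Hausdorff dimension $>n-\eps$; since Hausdorff dimension never exceeds $n$ here, taking $\eps\to 0$ forces the packing dimension to be exactly $n$. The final sentence of the theorem then follows because $\bigcap_{d}\bfW_n^d(\omega_d)^* \subseteq \bfW_n^{d_0}(\omega_{d_0})^* \subseteq \bfW_n^{d_0}(\omega_{d_0})$ and packing dimension is monotone, bounded above by $n$.

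The heart of the matter is the construction of the high-dimensional subset. Here I would lean on the variational principle of parametric geometry of numbers (to be recalled in Section~\ref{PGN}): each $\btheta$ corresponds to a ``template'' — an $n+1$-tuple of piecewise-linear trajectories — and the spectrum $(\omega_0(\btheta),\ldots,\omega_{n-1}(\btheta))$ is read off from the asymptotic behaviour of that template, while by the Laurent--Roy Theorem~\ref{GUGD} every admissible tuple is realised. The key idea is that prescribing the exponents only constrains the \emph{tails} of the template, so one has complete freedom over arbitrarily long initial segments. Concretely, I would build a Cantor-type set inside $\bigcap_d\bfW_n^d(\omega_d)^*$ by choosing, at stage $k$, a very fine net of rational approximation conditions that mimics the behaviour of a badly approximable (or generic) point up to a huge height $H_k$, and only after that height ``switches on'' the prescribed Diophantine behaviour to pin down the exponents. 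Because the net at stage $k$ can be taken with branching and gap sizes matching those used in the Jarník--Besicovitch construction for \emph{full} dimension, the resulting Cantor set has a natural mass distribution whose local dimension is $\ge n-\eps$; a standard mass distribution principle then gives $\dim_H \ge n-\eps$.

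The main obstacle is verifying that the ``switching'' construction genuinely lands inside $\bfW_n^d(\omega_d)^*$ simultaneously for all $d$ — that is, that inserting a long generic initial segment does not perturb any of the ordinary exponents $\omega_d$ away from their target values, and in particular does not accidentally \emph{increase} some $\omega_d$ (the lower bound $\omega_d(\btheta)\ge\omega_d$ on a $G_\delta$-ish condition is easy; the matching upper bound on the Cantor set is the delicate part). This is where the template formalism does the real work: one must check that a template which is ``neutral'' (following the Dirichlet trajectory) on $[0,H_k]$ and then prescribed on $[H_k,\infty)$ is still a valid template (continuity and slope constraints at the junction) and that its associated spectrum is unchanged, which reduces to a compactness/limiting argument as $k\to\infty$ together with the stability of the exponent functionals under the relevant topology on templates. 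Once this is in place, a diagonal choice of the $H_k$ (growing fast enough that stage $k$'s geometry dominates) yields the Cantor set of dimension $>n-\eps$, and letting $\eps\to0$ completes the proof.
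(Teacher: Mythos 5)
There is a genuine gap, and it is at the very first step. The ``classical fact'' you invoke is not correct as stated: because Hausdorff dimension is countably stable and monotone, the quantity $\inf\{\sup_i \dim_H E_i : E\subseteq\bigcup_i E_i\}$ is always equal to $\dim_H E$ itself, so your reduction would prove $\dim_P=\dim_H$. The correct countable-cover characterization of packing dimension uses the \emph{upper box} dimension of the pieces, $\dim_P E=\inf\{\sup_i \overline{\dim}_B E_i\}$, and upper box dimension is precisely \emph{not} countably stable, which is the whole point. Your subsequent ``equivalent'' reformulation makes the problem fatal: if $\bigcap_d \bfW_n^d(\omega_d)^*$ contained, for every $\eps>0$, a subset of Hausdorff dimension $>n-\eps$, then by monotonicity its Hausdorff dimension would be $n$ --- contradicting Theorem~\ref{Hcap}, which gives $\dim_H=2\sum_d(1+\omega_d)^{-1}<n$ whenever some $\omega_d>\tfrac{d+1}{n-d}$. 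So the Cantor-set-with-mass-distribution endgame cannot succeed: no subset of this intersection has Hausdorff dimension close to $n$ in general, and any argument producing one is proving a false statement.

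That said, your geometric intuition --- insert arbitrarily long ``generic/Dirichlet'' initial or intermediate segments into the template, and only switch on the prescribed Diophantine behaviour afterwards --- is exactly the right construction, and it is what the paper does. The fix is to feed it into the packing-dimension half of the variational principle rather than into a Hausdorff-type estimate: one builds a single Roy-system $\bP_\eps$ alternating between long intervals where $P_1=\cdots=P_{n+1}=q/(n+1)$ (local contraction rate $n$) occupying a proportion $1-\eps$ of each scale, and short Roy patterns pinning down all the $\omega_d$. The resulting point set lies in $\bigcap_d\bfW_n^d(\omega_d)^*$, its \emph{upper} average contraction rate $\overline{\delta}(\bP_\eps)$ tends to $n$ as $\eps\to0$ (even though the lower one stays at $2\sum_d(1+\omega_d)^{-1}$), and the variational principle identity $\dim_P=\sup_{\bP}\overline{\delta}(\bP)$ converts the $\limsup$ --- which is all the switching construction can control --- directly into the packing dimension, with the upper bound $n$ being trivial. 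You should also verify, rather than assert, that the junctions between the generic segments and the prescribed patterns satisfy the Roy-system axioms (S1)--(S3) and that the $\liminf$ in Proposition~\ref{prop} is unaffected; this is straightforward for the explicit pattern but it is where the exponents are actually certified.
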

Here and after, the condition  that $\omega_{0}, \ldots , \omega_{n-1}$ satisfy \eqref{gugd} ensures that our intersections are not empty.

Finally, we provide the exact Hausdorff dimension of intersections of Diophantine sets.

\begin{theorem}\label{Hcap}
Fix the dimension $n\ge 2$, choose a $n$-tuple of real numbers $\omega_{0}, \ldots , \omega_{n-1}$ satisfying the \emph{going-up} and \emph{going down} relations \eqref{gugd}.  Then
\begin{eqnarray}%\notag
%\dim_H\left( \bigcap_{d=0}^{n-1} \bfW_n^d({\omega}_{d})^* \cap \hat{\bfW}_n^{n-1}(n)^*  \right) &=& \dim_H\left( \bigcap_{d=0}^{n-1} \bfW_n^d({\omega}_{d}) \cap \hat{\bfW}_n^{n-1}(n)^* \right) \\ \notag
%=
 \dim_H\left( \bigcap_{d=0}^{n-1} \bfW_n^d({\omega}_{d})^*\right) &=& \dim_H\left( \bigcap_{d=0}^{n-1} \bfW_n^d({\omega}_{d}) \right) \quad = \quad  2\left( \sum_{d=0}^{n-1 }\frac{1}{1+\omega_d}\right).
\end{eqnarray}
In particular, the Hausdorff dimension is full if and only if $\omega_d= \frac{d+1}{n-d}$ for all $0\le d \le n-1$.\
\end{theorem}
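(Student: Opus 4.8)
The plan is to pass to the \emph{parametric geometry of numbers} recalled in Section~\ref{PGN} and then invoke the variational principle. To each admissible $\btheta$ one attaches its \emph{combined graph} $q\mapsto\mathbf{L}_{\btheta}(q)=(L_1(q),\dots,L_{n+1}(q))$, where $L_j(q)$ is the logarithm of the $j$-th successive minimum of the standard one-parameter family of convex bodies associated with $\btheta$, normalised so that $L_1(q)+\dots+L_{n+1}(q)=O(1)$. By the theorems of Schmidt--Summerer and Roy this graph stays within bounded distance of an $(n+1)$-system (a \emph{template}) $\f$, and each ordinary exponent is encoded in $\f$: by Mahler's theory of compound convex bodies $L_1(q)+\dots+L_{d+1}(q)$ is, up to $O(1)$, the logarithm of the first minimum of the $(d+1)$-st compound body, and $\omega_d(\btheta)$ is an explicit decreasing function of
\[
\liminf_{q\to\infty}\ \frac{1}{q}\bigl(L_1(q)+\dots+L_{d+1}(q)\bigr),
\]
with the convention $L_1+\dots+L_n=-L_{n+1}+O(1)$ covering the classical case $d=n-1$. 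I would first record this dictionary precisely, so that $\bigcap_{d=0}^{n-1}\bfW_n^d(\omega_d)$ (respectively $\bigcap_{d=0}^{n-1}\bfW_n^d(\omega_d)^{*}$) becomes, up to the bounded-distance equivalence, the set of $\btheta$ whose template lies in an explicitly described class $\mathcal{A}$ of templates (respectively in a subclass $\mathcal{A}^{*}\subset\mathcal{A}$). By Theorem~\ref{GUGD} the going-up/going-down hypotheses on $\omega_0,\dots,\omega_{n-1}$ are exactly what forces $\mathcal{A}^{*}\neq\emptyset$: it contains an eventually periodic, ``Roy-type'' template realising all the $\omega_d$ at once.

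Next I would apply the variational principle of Section~\ref{PGN} (Das--Fishman--Simmons--Urba\'nski), which computes
\[
\dim_H\bigl\{\btheta:\ \mathbf{L}_{\btheta}\ \text{is equivalent to some}\ \f\in\mathcal{A}\bigr\}\ =\ \sup_{\f\in\mathcal{A}}\ \underline{\delta}(\f),
\]
where $\underline{\delta}(\f)=\liminf_{q\to\infty}\tfrac1q\int_0^{q}\rho(\f,t)\,dt$ and $\rho(\f,t)$ is the elementary combinatorial ``dimension density'' of the template at time $t$. For the lower bound I would evaluate $\underline{\delta}$ on the periodic template above. Between its dips this template recovers fully (so that the uniform exponents stay at their Minkowski minima, keeping $\rho$ as large as possible), while one full period contains, in a prescribed order, the descent-ascent patterns forcing each $\omega_d(\btheta)=\omega_d$; a direct, if somewhat involved, computation over one period gives $\underline{\delta}=2\sum_{d=0}^{n-1}\tfrac1{1+\omega_d}$. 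Since the template realises $\omega_d(\btheta)=\omega_d$ for every $d$, the variational principle produces a set of $\btheta$ of exactly this Hausdorff dimension inside $\bigcap_{d=0}^{n-1}\bfW_n^d(\omega_d)^{*}$. (Alternatively, a Jarn\'ik--Besicovitch-style Cantor construction adapted to the same periodic pattern yields the same lower bound directly.)

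For the upper bound I must show $\underline{\delta}(\f)\le 2\sum_{d=0}^{n-1}\tfrac1{1+\omega_d}$ for \emph{every} $\f\in\mathcal{A}$, i.e.\ for every template whose associated ordinary exponents are all $\ge\omega_d$. This is the heart of the argument and I would treat it as a constrained optimisation. Decompose $[0,q]$ into the (locally finite) intervals of linearity of $\f$; on each one bound the instantaneous density $\rho(\f,t)$ by an explicit concave function of the instantaneous slopes of the compound curves $\bigl(L_1+\dots+L_{d+1}\bigr)'(t)$, exploiting the convexity and the interlacing of the partial sums $L_1+\dots+L_{k}$. Averaging over $[0,q]$ and relating the time-average of these slopes to the endpoint values $\tfrac1q(L_1+\dots+L_{d+1})(q)$ via a Jensen-type estimate, then matching the lower limit defining $\underline{\delta}(\f)$ against the $n$ lower-limit constraints coming from $\omega_d(\btheta)\ge\omega_d$, reduces the bound to a finite-dimensional optimisation whose value is $2\sum_d\tfrac1{1+\omega_d}$; the extremal case forces, at almost every time, a single ``active descent'' --- the periodic shape used for the lower bound. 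This also accounts for the coefficient $2$: a template constrained in all $n$ compound directions has no slack, so each prescribed descent must be compensated by an ascent and is effectively paid for twice. Finally, $\bigcap_{d}\bfW_n^d(\omega_d)^{*}\subseteq\bigcap_{d}\bfW_n^d(\omega_d)$, the upper bound applies to the larger set while the lower bound sits inside the smaller one, so all four quantities coincide; and since $\tfrac1{1+\omega_d}\le\tfrac{n-d}{n+1}$ with $\sum_{d=0}^{n-1}\tfrac{n-d}{n+1}=\tfrac n2$, the dimension equals $n$ exactly when $\omega_d=\tfrac{d+1}{n-d}$ for all $d$ --- the ``in particular'' clause.

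I expect the upper-bound optimisation to be the main obstacle: pinning down $\rho(\f,t)$ in terms of the compound-curve slopes, checking the right concavity, and verifying that the $n$ lower-limit constraints can be saturated only on the periodic template all require careful bookkeeping of how the $n+1$ trajectories of a template may interact --- and it is here that the going-up/going-down relations of Theorem~\ref{GUGD}, now as constraints on any template's own exponents, re-enter to keep the optimisation feasible and to pin the extremal shape.
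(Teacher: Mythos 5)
Your overall strategy coincides with the paper's: translate the exponents into $\liminf$ conditions on the partial sums of a Roy-system via Proposition \ref{prop}, get the lower bound from Roy's extremal pattern interlaced with long ``full recovery'' stretches on which all components travel together with slope $\frac{1}{n+1}$, and get the upper bound by dominating the local contraction rate by the slopes of those partial sums. Two repairable points on the lower bound: (i) your dictionary is indexed backwards --- $\omega_d(\btheta)$ is read off $\liminf_q \frac{1}{q}(P_1+\cdots+P_{n-d})$, not $P_1+\cdots+P_{d+1}$; (ii) no single periodic template attains $2\sum_d\frac{1}{1+\omega_d}$ exactly: for any fixed recovery ratio the average contraction rate at the start of each dip is strictly below $n$, so one needs a family $\bP_\eps$ with recovery ratio $1/\eps\to\infty$ and must take the supremum in the variational principle (this is also why the construction only shows the value is approached, not achieved by one template).

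The genuine gap is the upper bound, which you rightly call the heart of the argument but leave as an unexecuted ``constrained optimisation''. The missing idea is a single explicit \emph{linear} potential: the paper takes
\[ \Phi(q) \;=\; 2\sum_{d=1}^{n}\bigl(P_1(q)+\cdots+P_d(q)\bigr) \;=\; 2\sum_{j=1}^{n}(n+1-j)\,P_j(q), \]
and checks in two lines (Lemma \ref{potential3}) that $\Phi'(q)\ge \delta(\bP,q)$ on every interval of linearity, using only that the growing components $P_k,\dots,P_{k+j}$ share slope $\frac{1}{j+1}$ and that $j\le n+1-k=\delta$. No concavity or Jensen-type averaging is involved, and it is not clear your proposed route (a concave bound on the density plus Jensen) would produce exactly the coefficients $2(n+1-j)$; the identity $2\sum_{d=1}^{n}\sum_{j=1}^{d}a_j$ versus $\sum_d q_d$ is where the factor $2$ actually comes from, not from a ``descent paid for twice'' heuristic. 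Integrating the lemma gives $q\,\Delta(\bP,q)\le\Phi(q)$ up to boundary terms. Finally, be aware that passing from $\Delta(q)\le\Phi(q)/q$ to $\underline{\delta}(\bP)\le 2\sum_d\liminf_q\frac{P_1+\cdots+P_d}{q}$ uses that a $\liminf$ of a sum is compared with the sum of $\liminf$s in the wrong direction; one must argue that the $n$ lower limits are (nearly) attained along a common subsequence, or otherwise close this step --- your ``matching the lower limit against the $n$ lower-limit constraints'' glosses over exactly this point.
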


The Hausdorff dimension is a decreasing function of $\omega_d$ for all $0\le d \le n-1$. The \emph{going-up} and \emph{going down} relations \eqref{gugd} provide that for any $0\le d < d' \le n-1$,
\begin{eqnarray*}
 \frac{1}{1+\omega_{d'}} &\le& \frac{n-d'}{n-d} \cdot \frac{1}{1+\omega_{d}},\\[3mm]
  \frac{1}{1+\omega_d} &\le& \frac{d'-d}{d'+1} + \frac{d+1}{d'+1} \cdot \frac{1}{1+\omega_{d'}}.
\end{eqnarray*}
We can use these formulae to compute the Hausdorff dimension of the sets $\left( \bigcap_{d\in\mathcal{D}_n} \bfW_n^d({\omega}_{d}) \right)$ where $\mathcal{D}_n$ is any subset of $\{0, \ldots , n-1\}$. In particular, we recover Theorem \ref{Hwd} and we have 
\begin{corollary}
\[  \dim_H\left( \bfW_n^0({\omega}_{0})^* \cap \bfW_n^{n-1}({\omega}_{n-1})^*\right) =\frac{n(2+\omega_{0}+\omega_{n-1})}{(1+\omega_{0})(1+\omega_{n-1})}.\]
\end{corollary}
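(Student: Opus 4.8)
The plan is to sandwich the intersection between full intersections of the type treated in Theorem~\ref{Hcap}, turning the problem into an optimization over the ``missing'' intermediate exponents; throughout write $u_d=\tfrac{1}{1+\omega_d}$.

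For the lower bound I would observe that for every $(n-2)$-tuple $(\omega_1,\dots,\omega_{n-2})$ completing $(\omega_0,\omega_{n-1})$ to a tuple satisfying \eqref{gugd} one has $\bigcap_{d=0}^{n-1}\bfW_n^d(\omega_d)^*\subseteq\bfW_n^0(\omega_0)^*\cap\bfW_n^{n-1}(\omega_{n-1})^*$, so Theorem~\ref{Hcap} gives $\dim_H\big(\bfW_n^0(\omega_0)^*\cap\bfW_n^{n-1}(\omega_{n-1})^*\big)\ge 2\sum_{d=0}^{n-1}u_d$. I would then take the completion whose weights interpolate affinely,
\[
u_d=\frac{n-1-d}{n-1}\,u_0+\frac{d}{n-1}\,u_{n-1}\qquad(0\le d\le n-1).
\]
To see this tuple satisfies \eqref{gugd}: rewriting \eqref{gugd} in the variables $u_d$ shows it is equivalent to the two displayed inequalities stated before the corollary taken over consecutive indices, and substituting the affine formula collapses every instance of each of those inequalities to the single case $d=0$, $d'=n-1$ — which is precisely the hypothesis that $(\omega_0,\omega_{n-1})$ is an admissible pair. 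Summing the arithmetic progression gives $\sum_{d=0}^{n-1}u_d=\tfrac n2(u_0+u_{n-1})=\tfrac{n(2+\omega_0+\omega_{n-1})}{2(1+\omega_0)(1+\omega_{n-1})}$, so the claimed quantity is a lower bound. Passing from $\bfW^*$ to $\bfW$ and from $\ge$ to $=$ costs nothing, since $\bfW_n^d(\omega')\subseteq\bfW_n^d(\omega)$ for $\omega'\ge\omega$ and, by Theorems~\ref{Hwd} and~\ref{Hcap}, the relevant dimension is nonincreasing in each exponent, so the extreme value is already attained at the thresholds $\omega_0,\omega_{n-1}$.

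For the upper bound I would take $\btheta\in\bfW_n^0(\omega_0)\cap\bfW_n^{n-1}(\omega_{n-1})$, so $\omega_0(\btheta)\ge\omega_0$ and $\omega_{n-1}(\btheta)\ge\omega_{n-1}$, and apply the first displayed inequality with $d=0$ (using $u_0(\btheta)\le u_0$) and the second with $d'=n-1$ (using $u_{n-1}(\btheta)\le u_{n-1}$) to each intermediate index; this forces $\omega_d(\btheta)\ge\omega_d^{\sharp}$ for values $\omega_d^{\sharp}$ depending only on $\omega_0$ and $\omega_{n-1}$, so $\btheta$ lies in the single \eqref{gugd}-admissible full intersection $\bigcap_{d=0}^{n-1}\bfW_n^d(\omega_d^{\sharp})$, whence Theorem~\ref{Hcap} bounds $\dim_H$ above by $2\sum_{d=0}^{n-1}\tfrac{1}{1+\omega_d^{\sharp}}$. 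The entire weight of the corollary then lies in the identity that this extremal admissible completion of $(\omega_0,\omega_{n-1})$ carries the same weight sum $\tfrac n2(u_0+u_{n-1})$ as the affine completion used for the lower bound — i.e.\ that chaining the two displayed inequalities from both ends pins $\sum_d u_d$ to the stated value rather than merely bounding it. I expect establishing this reconciliation of the two bounds to be the main obstacle; granting it, the remaining points — the starred versus unstarred sets, the general set $\bigcap_{d\in\mathcal{D}_n}\bfW_n^d(\omega_d)$ obtained by the same sandwich while optimizing over the coordinates $d\notin\mathcal{D}_n$, and the recovery of Theorem~\ref{Hwd} as the case $\mathcal{D}_n=\{d\}$ — are routine.
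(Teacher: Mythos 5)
Your lower bound is sound and is essentially all the paper itself offers: the affinely interpolated weights $u_d=\frac{n-1-d}{n-1}u_0+\frac{d}{n-1}u_{n-1}$ satisfy the consecutive constraints \eqref{gugd} exactly when the pair $(\omega_0,\omega_{n-1})$ satisfies the chained $d=0$, $d'=n-1$ inequalities (both families of consecutive conditions do collapse to that single case, as you assert), and the arithmetic progression sums to $\frac n2(u_0+u_{n-1})$. The gap is precisely where you flag it, and it cannot be closed: the ``reconciliation identity'' is false. The extremal admissible completion has $u_d^{\sharp}=\min\bigl(\frac{n-d}{n}u_0,\ \frac{n-1-d}{n}+\frac{d+1}{n}u_{n-1}\bigr)$, and $\sum_d u_d^{\sharp}$ is in general \emph{strictly larger} than $\frac n2(u_0+u_{n-1})$; averaging the two chained bounds over the pairing $d\leftrightarrow n-1-d$ yields $\sum_d u_d^\sharp\le\frac{n+1}{4}(u_0+u_{n-1})+\frac{n-1}{4}$, which is $\le\frac n2(u_0+u_{n-1})$ only when $u_0+u_{n-1}\ge1$, i.e.\ only at the extreme point $(\omega_0,\omega_{n-1})=(1/n,n)$.

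Worse, this is not merely a defect of your upper-bound strategy but an inconsistency between the Corollary and Theorem~\ref{Hcap}. Take $n=3$, $\omega_0=\tfrac12$, $\omega_1=\tfrac54$, $\omega_2=4$. Then \eqref{gugd} holds: $\omega_0\ge\tfrac13$; for $d=1$, $\tfrac{5}{13}\le\tfrac12\le\tfrac12$; for $d=2$, $\tfrac87\le\tfrac54\le\tfrac32$. Theorem~\ref{Hcap} therefore gives $\dim_H\bigl(\bigcap_{d=0}^{2}\bfW_3^d(\omega_d)^*\bigr)=2\bigl(\tfrac23+\tfrac49+\tfrac15\bigr)=\tfrac{118}{45}$, and this set is contained in $\bfW_3^0(\tfrac12)^*\cap\bfW_3^2(4)^*$, whereas the Corollary asserts the latter has dimension $\tfrac{3(2+1/2+4)}{(3/2)\cdot 5}=\tfrac{13}{5}=\tfrac{117}{45}$. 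So, granting Theorem~\ref{Hcap}, the stated value is only a lower bound; the actual dimension should be $2\sum_d u_d^{\sharp}$, the supremum of $2\sum_d\frac{1}{1+\omega_d}$ over all \eqref{gugd}-admissible completions of the pair, and it exceeds the displayed formula whenever $n\ge3$ and $(\omega_0,\omega_{n-1})$ lies in the interior of the admissible region. No argument along your lines (or the paper's) can establish the claimed equality; either the Corollary or Theorem~\ref{Hcap} needs to be corrected.
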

Note that in this case, as expected, the Hausdorff dimension is bounded above by \[\dim_H\left(\bfW_n^{n-1}({\omega}_{n-1})^*\right) = n-1 + \frac{n+1}{\omega_{n-1} +1} \textrm{  and } \dim_H\left(\bfW_n^{0}({\omega}_{0})^*\right) = \frac{n+1}{\omega_{0} +1}.\] We have equality when $\omega_0$ is respectively the upper and lower bound in Khintchine's transference principle.\\

It is an interesting open question to extend these results to the case of intermediate exponents of matrices, or to Diophantine sets related to uniform exponents. See the related definitions in \cite{Ger}, and open questions in \cite[\S3]{VarPrinc}.\\

\section{Variational principle in parametric geometry of numbers}\label{PGN}
 
We use the notation introduced by Roy in \cite{Roy,Royspec} which is essentially dual to the one of Schmidt and Summerer \cite{SSfirst,SS}. We refer the reader to these papers for further details. Here $ \boldsymbol{x}\cdot \boldsymbol{y}=x_1y_1+ \cdots + x_ny_n$  is the usual scalar product of vectors $\boldsymbol{x}$ and $\boldsymbol{y}$, and $\|\boldsymbol{x}\|_2=\sqrt{\boldsymbol{x}\cdot \boldsymbol{x}}$ is the usual Euclidian norm.\\

Let $\boldsymbol{u} = (u_0, \ldots , u_n)$ be a vector in $\mathbb{R}^{n+1}$, with Euclidean norm $\|\boldsymbol{u}\|_2=1$. For a real parameter $Q\geq1$ we consider the convex body
\[ \mathcal{C}_{\boldsymbol{u}}(Q) = \left\{ \boldsymbol{x} \in \mathbb{R}^{n+1} \mid \|\boldsymbol{x}\|_2 \leq 1 , \; |\boldsymbol{x} \cdot \boldsymbol{u} | \leq Q^{-1}     \right\}. \]
 For $1\leq d \leq n+1$ we denote by $\lambda_d\left( \mathcal{C}_{\boldsymbol{u}}(Q) \right)$ the $d$-th minimum of $\mathcal{C}_{\boldsymbol{u}}(Q)$ relatively to the lattice $\mathbb{Z}^{n+1}$. For $q\geq0$ and $1\leq d \leq n+1$ we set
\[ L_{\boldsymbol{u},d}(q) = \log   \lambda_d\left( \mathcal{C}_{\boldsymbol{u}}(e^q) \right). \]
Finally, we define the following map associated with $\boldsymbol{u}$:

\[ \begin{array}{rccl}
 \boldsymbol{L_u} :& [0,\infty ) & \to& \mathbb{R}^{n+1}  \\
  & q & \mapsto& (L_{\boldsymbol{u},1}(q), \ldots , L_{\boldsymbol{u},n+1}(q))  .
  \end{array}\]
  The lattice $\mathbb{Z}^{n+1}$ is invariant under permutation of coordinates. Hence,  $\boldsymbol{L_u}$ remains the same if we permute the coordinates in $\boldsymbol{u}$. Since $\|\boldsymbol{u}\|_2=1$ we can thus assume that $u_0\neq0$. \\
  
The following proposition links the exponents of Diophantine approximation associated with $\boldsymbol{\theta}=(\frac{u_1}{u_0}, \ldots , \frac{u_n}{u_0})$ to the behavior of the map $ \boldsymbol{L_u}$, assuming $u_0\neq0$. It was first stated by Schmidt and Summerer in \cite{SSfirst} (Theorem 1.4). It also appears as Relations (1.8) and (1.9) in \cite{SS}. In the notation of Roy \cite{Royspec} (Proposition 3.1), it reads as follows.

\begin{proposition}\label{prop}
Let $\boldsymbol{u} = (u_0, \ldots , u_n) \in \mathbb{R}^{n+1}$, with Euclidean norm $\|\boldsymbol{u}\|_2=1$ and  $u_0\neq 0$. Set $\boldsymbol{\theta}=(\frac{u_1}{u_0}, \ldots , \frac{u_n}{u_0})$. We have the following relations:
\begin{eqnarray*}
\liminf_{q\to+\infty}  \frac{ L_{\boldsymbol{u},1}(q) + \cdots + L_{\boldsymbol{u},k}(q)}{q} &=& \frac{1}{1+{\omega}_{n-k}(\boldsymbol{\theta})},\\
\limsup_{q\to+\infty} \frac{ L_{\boldsymbol{u},1}(q) + \cdots + L_{\boldsymbol{u},k}(q)}{q} &=& \frac{1}{1+\hat{\omega}_{n-k}(\boldsymbol{\theta})}.\\
\end{eqnarray*}
\end{proposition}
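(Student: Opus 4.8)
The plan is to combine Mahler's theory of compound convex bodies with W.~M.~Schmidt's description of the height of a rational subspace through its Grassmann coordinates. Fix $1\le k\le n$, put $d:=n-k$ and $N:=n+1$. The first step is to determine the shape of the $k$-th compound convex body $\mathcal{C}_{\boldsymbol{u}}(Q)^{(k)}\subset\bigwedge^{k}\RR^{N}$, taken with respect to the lattice $\bigwedge^{k}\ZZ^{N}$: sandwiching $\mathcal{C}_{\boldsymbol{u}}(Q)$ between two ellipsoids with semiaxes $\asymp Q^{-1}$ along $\boldsymbol{u}$ and $\asymp1$ along $\boldsymbol{u}^{\perp}$, and using that compounds respect inclusions and that the compound of an ellipsoid is the ellipsoid whose semiaxes are the $k$-fold products of the original ones, one finds that, up to factors depending only on $n$ and $k$, $\mathcal{C}_{\boldsymbol{u}}(Q)^{(k)}$ is an ellipsoid that is thin, of width $\asymp Q^{-1}$, along the subspace $W_{k}:=\boldsymbol{u}\wedge\bigwedge^{k-1}(\boldsymbol{u}^{\perp})$ of dimension $\binom{n}{k-1}$ and wide, of width $\asymp1$, along the complementary subspace $W_{k}'=\bigwedge^{k}(\boldsymbol{u}^{\perp})$. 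Mahler's theorem on the minima of compound bodies then gives $\lambda_{1}\!\left(\mathcal{C}_{\boldsymbol{u}}(Q)^{(k)}\right)\asymp_{n,k}\lambda_{1}(\mathcal{C}_{\boldsymbol{u}}(Q))\cdots\lambda_{k}(\mathcal{C}_{\boldsymbol{u}}(Q))$, hence $L_{\boldsymbol{u},1}(q)+\cdots+L_{\boldsymbol{u},k}(q)=\log\lambda_{1}\!\left(\mathcal{C}_{\boldsymbol{u}}(e^{q})^{(k)}\right)+O_{n,k}(1)$, so it is enough to compute the growth rate of $\log\lambda_{1}\!\left(\mathcal{C}_{\boldsymbol{u}}(e^{q})^{(k)}\right)$.

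Next I would set up the dictionary between primitive decomposable vectors of $\bigwedge^{k}\ZZ^{N}$ and rational affine subspaces of $\RR^{n}$ of dimension $d$. To such a $\boldsymbol{p}$, spanning a rational subspace $W\subset\RR^{N}$ of dimension $k$, associate $V:=W^{\perp}$ of dimension $d+1$; away from the degenerate case $W^{\perp}\subseteq\{x_{0}=0\}$, $V$ is the linear span of the projective closure of a unique rational affine $L\subset\RR^{n}$ with $\dim L=d$. Schmidt's theory gives $\|\boldsymbol{p}\|_{2}\asymp_{n}H(W)=H(V)=H(L)$, and a short computation with the wedge — expanding an orthonormal frame $(\widehat{v}_{i})$ of $W$ along $\boldsymbol{u}$ and $\boldsymbol{u}^{\perp}$, using $\sum_{i}(\widehat{v}_{i}\cdot\boldsymbol{u})^{2}=\dist(\boldsymbol{u},V)^{2}$, and $\dist(\boldsymbol{u},V)\asymp d(\btheta,L)$ in the regime $d(\btheta,L)\to0$ — yields $\|\pi_{W_{k}}(\boldsymbol{p})\|_{2}\asymp\|\boldsymbol{p}\|_{2}\,\dist(\boldsymbol{u},V)\asymp H(L)\,d(\btheta,L)$, with implied constants depending only on $n$ and $\btheta$. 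Hence the gauge of $\mathcal{C}_{\boldsymbol{u}}(Q)^{(k)}$ at $\boldsymbol{p}$ is $\asymp\max\!\big(H(L),\,H(L)\,d(\btheta,L)\,Q\big)$.

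Third come the two matching inequalities. For the upper bound on $\liminf_{q\to\infty}q^{-1}\big(L_{\boldsymbol{u},1}(q)+\cdots+L_{\boldsymbol{u},k}(q)\big)$: fix $\eps>0$; by the definition of $\omega:=\omega_{d}(\btheta)$ there are arbitrarily large $H$ with a rational affine $L$, $\dim L=d$, satisfying $H(L)\le H$ and $H(L)\,d(\btheta,L)\le H^{-\omega+\eps}$. Plugging the associated $\boldsymbol{p}$ into the gauge formula with $Q=e^{q}$, $q=(1+\omega-\eps)\log H$, bounds that gauge by $\asymp H=e^{q/(1+\omega-\eps)}$, so $\log\lambda_{1}\!\left(\mathcal{C}_{\boldsymbol{u}}(e^{q})^{(k)}\right)\le q/(1+\omega-\eps)+O(1)$ for these arbitrarily large $q$; combined with the first step and $\eps\to0$ this gives $\liminf\le 1/(1+\omega)$, which in particular is $\le\frac{n-d}{n+1}<1$. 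For the reverse inequality, take $q\to\infty$ realising the $\liminf$, so $\log\lambda_{1}\!\left(\mathcal{C}_{\boldsymbol{u}}(e^{q})^{(k)}\right)\le(\beta+\eps)q$ where, by the bound just obtained, we may take $\eps$ with $\beta+\eps<1$. Rather than an arbitrary minimal vector of $\bigwedge^{k}\ZZ^{N}$ — which need not be decomposable — I would use $\boldsymbol{x}_{1}\wedge\cdots\wedge\boldsymbol{x}_{k}$, the wedge of vectors realising the first $k$ minima $\lambda_{1},\dots,\lambda_{k}$ of $\mathcal{C}_{\boldsymbol{u}}(e^{q})$: writing $\boldsymbol{x}_{i}=b_{i}\boldsymbol{u}+\boldsymbol{y}_{i}$ with $\boldsymbol{y}_{i}\perp\boldsymbol{u}$ and $|b_{i}|\le\lambda_{i}e^{-q}$, only the terms with at most one factor $\boldsymbol{u}$ survive — the one with none lying in $W_{k}'$, the others in $W_{k}$ — giving $\|\pi_{W_{k}}(\boldsymbol{x}_{1}\wedge\cdots\wedge\boldsymbol{x}_{k})\|_{2}\le k\,e^{-q}\prod_{j\le k}\lambda_{j}$ and $\|\boldsymbol{x}_{1}\wedge\cdots\wedge\boldsymbol{x}_{k}\|_{2}\le\prod_{j\le k}\lambda_{j}$. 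Thus the primitive decomposable vector $\boldsymbol{p}$ spanning $\Span(\boldsymbol{x}_{1},\dots,\boldsymbol{x}_{k})$ satisfies, using $\prod_{j\le k}\lambda_{j}\asymp\lambda_{1}(\mathcal{C}_{\boldsymbol{u}}(e^{q})^{(k)})$, the bounds $\|\boldsymbol{p}\|_{2}\le e^{(\beta+\eps)q}$ and $\|\pi_{W_{k}}(\boldsymbol{p})\|_{2}\ll e^{(\beta+\eps-1)q}$; the dictionary turns this into a rational affine $L$, $\dim L=d$, with $H(L)\le e^{(\beta+\eps)q}$ and $H(L)\,d(\btheta,L)\ll e^{(\beta+\eps-1)q}$, i.e.\ (using $e^{q}\ge H(L)^{1/(\beta+\eps)}$) $H(L)\,d(\btheta,L)\le H(L)^{-(1/(\beta+\eps)-1)+o(1)}$. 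As $q\to\infty$ one has $H(L)\to\infty$ — otherwise a fixed proper rational affine subspace would contain $\btheta$, against the $\QQ$-linear independence of $1,\theta_{1},\dots,\theta_{n}$ — so $\omega_{d}(\btheta)\ge 1/(\beta+\eps)-1$, that is $1/(1+\omega_{d}(\btheta))\le\beta+\eps$; letting $\eps\to0$ closes the loop. The $\limsup$--$\hat{\omega}_{d}(\btheta)$ statement follows verbatim, replacing ``for arbitrarily large $H$'' by ``for all sufficiently large $H$'' and $\liminf$ by $\limsup$.

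The crux, and the step I expect to be the main obstacle, is precisely this passage between the analytic side — partial sums of the $L_{\boldsymbol{u},j}$, i.e.\ $\lambda_{1}$ of a compound body — and the arithmetic side of rational affine subspaces approximating $\btheta$: Mahler's theory does not ensure that $\lambda_{1}(\mathcal{C}_{\boldsymbol{u}}(Q)^{(k)})$ is attained on a decomposable vector, and the device of wedging the first $k$ minima vectors of $\mathcal{C}_{\boldsymbol{u}}(Q)$ and controlling their $W_{k}$-component is what repairs this. Two minor points remain: the degenerate configurations with $W^{\perp}\subseteq\{x_{0}=0\}$ cannot occur for the minimal vector once $q$ is large, because for such $W$ one has $\dist(\boldsymbol{u},W^{\perp})\ge|u_{0}|$ and hence $\|\pi_{W_{k}}(\boldsymbol{p})\|_{2}\gtrsim\|\boldsymbol{p}\|_{2}\ge1$, contradicting $\|\pi_{W_{k}}(\boldsymbol{p})\|_{2}\ll e^{(\beta+\eps-1)q}\to0$; and the comparison $\dist(\boldsymbol{u},V)\asymp d(\btheta,L)$, together with the two-sided nature of $\|\pi_{W_{k}}(\boldsymbol{p})\|_{2}\asymp\|\boldsymbol{p}\|_{2}\dist(\boldsymbol{u},V)$, must be justified; both are routine.
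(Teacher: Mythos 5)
The paper does not prove Proposition \ref{prop}: it is quoted verbatim from Schmidt--Summerer \cite{SSfirst} (Theorem 1.4), \cite{SS} (relations (1.8)--(1.9)) and Roy \cite{Royspec} (Proposition 3.1), so there is no internal proof to compare against. That said, your argument is a correct reconstruction of the proof given in those sources: the combination of Mahler's compound-body theorem $\lambda_1\bigl(\mathcal{C}_{\boldsymbol{u}}(Q)^{(k)}\bigr)\asymp\lambda_1\cdots\lambda_k$, Schmidt's duality $H(W)=H(W^{\perp})$ together with $\|\boldsymbol{p}\|_2\asymp H(W)$, and the identity $\|\pi_{W_k}(\boldsymbol{p})\|_2=\|\boldsymbol{p}\|_2\,\dist(\boldsymbol{u},W^{\perp})$ is exactly the mechanism behind the transference between partial sums of the $L_{\boldsymbol{u},j}$ and the intermediate exponents. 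You also correctly isolate the one genuinely delicate point --- that the first minimum of the compound body need not be attained on a decomposable vector --- and repair it the standard way, by wedging integer vectors realizing $\lambda_1,\dots,\lambda_k$ of $\mathcal{C}_{\boldsymbol{u}}(e^q)$ and bounding the $W_k$-component termwise; the observation that $\beta+\eps<1$ (from the already-proved inequality $\liminf\le\tfrac{n-d}{n+1}$) is needed to make the exponent of $H$ negative, and you supply it. The remaining comparisons you defer ($\dist(\boldsymbol{u},V)\asymp d(\btheta,L)$ in the regime of small distances, and the exclusion of subspaces contained in $\{x_0=0\}$) are indeed routine and handled as you indicate. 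I see no gap.
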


Thus, if we know an explicit map $\boldsymbol{P}=(P_1, \ldots, P_{n+1}): [0,\infty) \to \mathbb{R}^{n+1}$, such that 
$\boldsymbol{L}_{\boldsymbol{u}}-\boldsymbol{P}$ is bounded, then we can compute the $2n$ exponents $\hat{\omega}_{0}(\boldsymbol{\theta}), \ldots , \hat{\omega}_{n-1}(\boldsymbol{\theta}), {\omega}_{0}(\boldsymbol{\theta}), \ldots , {\omega}_{n-1}(\boldsymbol{\theta})$ for the above point $\boldsymbol{\theta}$ upon replacing $L_{\boldsymbol{u},d}$ by $P_d$ in the above formulas for $1\leq d \leq n$.\\
For this purpose, we consider the following family of maps, introduced by Roy in \cite{Royspec}.

\begin{definition}[Roy, 2014]
Let $I$ be a subinterval of $[0,\infty)$ with non-empty interior. A Roy-system of dimension $n+1$ on $I$ is a continuous piecewise linear map $\boldsymbol{P} = (P_1, \ldots , P_{n+1}): I \to \mathbb{R}^{n+1}$ with the following three properties.

\begin{description}

\item[(S1)]{For each $q\in I$, we have $0\leq P_1(q) \leq \cdots \leq P_{n+1}(q) $  and $P_1(q) + \cdots + P_{n+1}(q) =q$.}

\item[(S2)]{ If $H$ is a non empty open subinterval of $I$ on which $\boldsymbol{P}$ is differentiable, then there are integers $\underline{r}, \bar{r}$ with $1\leq \underline{r} \leq \bar{r} \leq n+1$ such that $P_{\underline{r}}, P_{\underline{r}+1}, \ldots , P_{\bar{r}}$ coincide on the whole interval $H$ and have slope $1/(\bar{r}-\underline{r}+1)$ while any other component $P_d$ of $\boldsymbol{P}$ is constant on $H$ .}

\item[(S3)]{ If $q$ is an interior point of $I$ at which $\boldsymbol{P}$ is not differentiable, if $\underline{r}, \bar{r}, \underline{s},\bar{s}$ are the integers for which
\[ P_d'(q^-) = \frac{1}{\bar{r}-\underline{r}+1}  \;  (\underline{r} \leq d \leq \bar{r}) \; \textrm{  and  }  \; P_d'(q^+) = \frac{1}{\bar{s}-\underline{s}+1}  \;  (\underline{s} \leq d \leq \bar{s}) \; , \]
and if $\underline{r}<\bar{s}$, then we have $P_{\underline{r}}(q) = P_{\underline{r}+1}(q) = \ldots = P_{\bar{s}}(q)$.
 }\\
\end{description}
\end{definition}

Here $P_d'(q^-)$ (resp. $P_d'(q^+)$) denotes the left (resp. right) derivative of $P_d$ at $q$.
The next fondamental result combine Theorem 4.2 and Corollary 4.7 of \cite{Royspec}.

\begin{theorem}[ Roy, 2014]\label{DR}
For each non-zero point $\boldsymbol{u} \in \mathbb{R}^{n+1}$, there exists $q_0\geq 0$ and a Roy-system $\boldsymbol{P}$ on $[q_0,\infty)$ such that $\boldsymbol{L_u} - \boldsymbol{P}$ is bounded on $[q_0,\infty)$. Conversely, for each Roy-system $\boldsymbol{P}$ on an interval $[q_0,\infty)$ with $q_0\geq0$, there exists a non-zero point $\boldsymbol{u}\in\mathbb{R}^{n+1}$ such that $\boldsymbol{L_u} - \boldsymbol{P}$ is bounded on $[q_0,\infty)$.\\
\end{theorem}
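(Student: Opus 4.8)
The plan is to prove the two assertions separately, since the direct statement is an analytic rigidity result about successive minima while the converse is an explicit recursive construction; I would treat them in that order.

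For the direct direction, I would first record the elementary properties of $\boldsymbol{L_u}$ that follow immediately from the geometry of the bodies $\mathcal{C}_{\boldsymbol{u}}(e^q)$. The minima are non-decreasing in $d$, so $L_{\boldsymbol{u},1}(q)\le\cdots\le L_{\boldsymbol{u},n+1}(q)$, which is the ordering in (S1). Minkowski's Second Convex Body Theorem together with the elementary estimate $\Vol(\mathcal{C}_{\boldsymbol{u}}(e^q))\asymp e^{-q}$ (the unit ball intersected with a slab of width $\asymp e^{-q}$) gives the trace estimate $\sum_{d} L_{\boldsymbol{u},d}(q)=q+O(1)$, the bounded-error form of the second part of (S1). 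Finally, since $\mathcal{C}_{\boldsymbol{u}}(e^{q+t})$ is obtained from $\mathcal{C}_{\boldsymbol{u}}(e^q)$ by compressing by the factor $e^{-t}$ in the single direction $\boldsymbol{u}$, each $\lambda_d$ grows by a factor at most $e^{t}$ and never decreases, so every $L_{\boldsymbol{u},d}$ is non-decreasing and $1$-Lipschitz. Thus each slope lies in $[0,1]$ and, by the trace estimate, the slopes sum to $1$; after a uniform mollification $\boldsymbol{L_u}$ is already close to a continuous piecewise-linear map with slopes in $[0,1]$ summing to $1$.

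The crux is upgrading this to the rigid slope pattern of (S2) and the transition rule (S3): that at each generic $q$ exactly one block of consecutive components is active, with common slope $1/(\bar r-\underline r+1)$, and all other components are locally constant. Here the key is that each minimum $\lambda_d$ is realised by a primitive lattice vector, and these can be arranged into a flag. A lattice vector lying (up to scale) in $\boldsymbol{u}^{\perp}$ feels only the ball constraint and contributes a locally constant component, whereas a vector with nonzero projection onto $\boldsymbol{u}$ is forced to grow as the slab shrinks. Analysing the evolution of this flag — passing through Mahler's theory of compound convex bodies, which converts statements about the $d$-th minima of $\mathcal{C}_{\boldsymbol{u}}(e^q)$ into statements about first minima of its $d$-th compound — shows that on each maximal interval where the flag is combinatorially constant the active components are tied and must move together, and conservation of the trace pins their common slope to the reciprocal $1/(\bar r-\underline r+1)$. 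The rule (S3) then follows from continuity of the minima across the discrete times at which the flag jumps, the tied indices forcing the claimed equalities of the limiting values. I expect this slope-rigidity analysis, and in particular the uniform control of the error across all switching times simultaneously, to be the main obstacle.

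For the converse I would construct $\boldsymbol{u}$ by a nested recursive procedure tracking the prescribed phases of $\boldsymbol{P}$. On each interval where $\boldsymbol{P}$ has a fixed slope pattern, the indices carrying positive slope prescribe a rational subspace that $\boldsymbol{u}$ must approximate to a prescribed accuracy; I would choose a corresponding sequence of primitive lattice vectors (equivalently a lattice flag) so that the successive minima of $\mathcal{C}_{\boldsymbol{u}}(e^q)$ reproduce $\boldsymbol{P}$ up to a bounded additive error on that phase. The compatibility conditions (S3) at the breakpoints are exactly what guarantee that the flag prescribed on one phase extends to the next without an uncontrolled jump, so the successive choices are consistent and, over the infinitely many phases filling $[q_0,\infty)$, converge to a single nonzero $\boldsymbol{u}\in\mathbb{R}^{n+1}$ with $\boldsymbol{L_u}-\boldsymbol{P}$ bounded. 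The delicate point of this direction is the quantitative bookkeeping: keeping the approximation at each stage sharp enough that the cumulative error stays bounded while still allowing the limit $\boldsymbol{u}$ to exist.
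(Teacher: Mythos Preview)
The paper does not prove this theorem at all: it is stated as a result of Roy, with the sentence ``The next fondamental result combine Theorem 4.2 and Corollary 4.7 of \cite{Royspec}'' and no argument is given. So there is no ``paper's own proof'' to compare your proposal against; the statement is imported wholesale from \cite{Roy,Royspec} and used as a black box in the subsequent dimension computations.

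As for the content of your sketch, it is a reasonable high-level outline of Roy's actual argument in \cite{Roy,Royspec}: the direct direction does rest on Minkowski's second theorem for the trace identity, Lipschitz monotonicity of the $L_{\boldsymbol{u},d}$, and a combinatorial analysis of which lattice vectors realise the successive minima; the converse is indeed a recursive construction of a point $\boldsymbol{u}$ phase by phase. That said, your sketch underestimates the work in both directions. For the direct part, the passage from ``slopes in $[0,1]$ summing to $1$'' to the rigid block structure (S2)--(S3) is not a mollification or flag argument alone; Roy introduces an auxiliary combinatorial object (his ``canvases'' and the associated trajectories) and proves a nontrivial approximation theorem showing every $\boldsymbol{L_u}$ is within bounded distance of such a rigid system. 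For the converse, the construction is not simply choosing a flag on each phase: Roy builds an explicit sequence of integer bases adapted to the prescribed $\boldsymbol{P}$ and proves convergence and error control via a careful induction, which is the technical heart of \cite{Roy}. Your proposal identifies the right ingredients but treats as routine precisely the steps that constitute the substance of Roy's paper.
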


In a landmark paper, Das, Fishman, Simmons and Urba\'nski \cite{VarPrinc} provide a quantitative version of Roy's Theorem, and extend it to the case of matrices. We state it here in the particular case of vectors, but first we need some definitions.\\

For a Roy-system $\bP$ on $[Q_0,\infty)$, we define the \emph{local contraction rate} $\delta(\bP,q)$ by 
\[\delta(\bP,q) =n+1-\kappa\]
where $\kappa= \min\{ k \mid P'_k(q)>0 \}$. We then consider the \emph{average contraction rate} defined by 
\[ \Delta(\bP,Q)= \frac{1}{Q-Q_0}\int_{Q_0}^Q \delta(\bP,q) dt,\]
and consider the lower and upper average contraction rates $\underline{\delta}(\bP)$ and $\overline{\delta}(\bP)$ defined by
\begin{eqnarray}\label{ud}
\underline{\delta}(\bP)&=& \liminf_{Q\to\infty}\Delta(\bP,Q),\\ \label{od}
\overline{\delta}(\bP)&=& \limsup_{Q\to\infty}\Delta(\bP,Q).
 \end{eqnarray}

\begin{theorem}[Variational principle, Das -- Fishman -- Simmons -- Urba\'nski, 2019]
Be $\mathcal{P}$ a set of Roy-systems on $[Q_0,\infty)$ closed under finite perturbation. Let 
\[\mathcal{M}(\mathcal{P}) = \{ \btheta \in \RR^n \mid \exists \bP \in \mathcal{P}, C\in \RR, \| L_\btheta-\bP\| \le C \}.\]
Then,
\begin{eqnarray*}
\dim_H(\mathcal{M}(\mathcal{P})) = \sup_{\bP \in \mathcal{P}} \underline{\delta}(\bP),\\
\dim_P(\mathcal{M}(\mathcal{P})) = \sup_{\bP \in \mathcal{P}} \overline{\delta}(\bP).\\
 \end{eqnarray*}
\end{theorem}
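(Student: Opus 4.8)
The plan is to reduce the statement to a counting problem for a family of Cantor-like subsets of $\RR^n$ and then apply standard dimension tools: the mass distribution principle for $\dim_H$ and the modified box-counting description of $\dim_P$. Fix a Roy-system $\bP \in \mathcal{P}$ on $[Q_0,\infty)$ and write $S(\bP) = \{\btheta \in \RR^n \mid \exists\, C,\ \|L_\btheta - \bP\| \le C\}$, so $\mathcal{M}(\mathcal{P}) = \bigcup_{\bP \in \mathcal{P}} S(\bP)$. Theorem \ref{DR} already gives $S(\bP) \neq \emptyset$; the crucial first step is a quantitative refinement of that construction. Namely: the set of $\btheta$ whose trajectory $L_\btheta$ shadows $\bP$ on the window $[Q_0,Q]$ is a disjoint union of $N(Q) \asymp \exp\!\big(\int_{Q_0}^{Q}\delta(\bP,t)\,dt\big)$ boxes, each of diameter $\asymp e^{-(Q-Q_0)}$, nested so that each window-$[Q_0,Q]$ box contains $\asymp \exp\!\big(\int_{Q}^{Q'}\delta(\bP,t)\,dt\big)$ boxes of the window $[Q_0,Q']$, roughly evenly distributed inside it. The exponent $\delta(\bP,t)=n+1-\kappa$ shows up because at parameter $t$ precisely $n+1-\kappa$ successive minima of the associated lattice are increasing, so the family of admissible $\btheta$ branches in that many directions at the flow rate; establishing this shadowing with two-sided control is where parametric geometry of numbers does the real work. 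The hypothesis that $\mathcal{P}$ is closed under finite perturbation makes the choice of the initial window harmless, which lets the construction be started anywhere.

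Granting this, the upper bounds are book-keeping. Covering $S(\bP)$ at scale $r(Q)=e^{-(Q-Q_0)}$ by the $N(Q)$ boxes gives, for every $s$,
\[ \mathcal{H}^s_{r(Q)}\big(S(\bP)\big) \;\lesssim\; N(Q)\,r(Q)^s \;=\; \exp\!\Big((Q-Q_0)\big(\Delta(\bP,Q)-s\big)\Big), \]
so letting $Q\to\infty$ along a sequence realising $\underline{\delta}(\bP)=\liminf_Q\Delta(\bP,Q)$ yields $\dim_H S(\bP)\le\underline{\delta}(\bP)$. After a standard reduction (finite perturbations plus finite precision) $\mathcal{M}(\mathcal{P})$ is a countable union of such sets, so countable stability of Hausdorff dimension gives $\dim_H \mathcal{M}(\mathcal{P}) \le \sup_{\bP}\underline{\delta}(\bP)$. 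For packing dimension one invokes $\dim_P=$ modified upper box dimension: the same covers show $\overline{\dim}_B S(\bP)=\limsup_Q\Delta(\bP,Q)=\overline{\delta}(\bP)$, and countable stability of $\dim_P$ gives $\dim_P\mathcal{M}(\mathcal{P})\le\sup_{\bP}\overline{\delta}(\bP)$.

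For the lower bounds, fix $\bP$ and form $K=\bigcap_j K_j\subseteq S(\bP)$ with $K_j$ the union of all window-$[Q_0,Q_j]$ boxes for a rapidly increasing sequence $Q_j$, and put on $K$ the natural measure $\mu$ assigning mass $1/N(Q_j)$ to each stage-$j$ box. The nesting estimate gives $\mu(B(x,r))\asymp 1/N(Q)=r^{\Delta(\bP,Q)}$ for $r\asymp e^{-(Q-Q_0)}$, so at every point the lower local dimension of $\mu$ equals $\underline{\delta}(\bP)$ and the upper local dimension equals $\overline{\delta}(\bP)$. The mass distribution principle then yields $\dim_H K\ge\underline{\delta}(\bP)$ and the local-dimension criterion for packing dimension yields $\dim_P K\ge\overline{\delta}(\bP)$; taking the supremum over $\bP\in\mathcal{P}$ completes both equalities.

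The main obstacle is the first step: converting the combinatorial quantity $\delta(\bP,\cdot)$ into sharp two-sided metric estimates for the sets of admissible $\btheta$ — producing enough well-separated branches for the lower bound while not over-counting for the upper bound. This is precisely the quantitative parametric-geometry-of-numbers input of \cite{VarPrinc}; the remaining points (countability of the effective family of templates, independence of the dimensions on $C$, and passing from $S(\bP)$ to $\mathcal{M}(\mathcal{P})$) are routine given the finite-perturbation hypothesis and standard dimension theory.
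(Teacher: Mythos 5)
The paper does not prove this theorem: it is quoted (in the vector case) from Das--Fishman--Simmons--Urba\'nski \cite{VarPrinc} and used as a black box, so there is no internal proof to compare yours against. Judged on its own, your write-up is a reasonable reconstruction of the overall strategy of \cite{VarPrinc} --- a tree of nested cells tracking the template, upper bounds by counting covers at scale $e^{-(Q-Q_0)}$, lower bounds by the mass distribution principle and the local-dimension criterion for packing dimension --- but it is an outline rather than a proof. The entire analytic content is concentrated in your ``crucial first step,'' the two-sided estimate that the set of $\btheta$ shadowing $\bP$ on $[Q_0,Q]$ decomposes into $\asymp \exp\bigl(\int_{Q_0}^{Q}\delta(\bP,t)\,dt\bigr)$ well-separated cells of diameter $\asymp e^{-(Q-Q_0)}$ with the stated nesting; you state this, note that it is ``where parametric geometry of numbers does the real work,'' and then attribute it back to \cite{VarPrinc}. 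That is circular as a standalone argument: without a proof of this counting lemma (which in \cite{VarPrinc} occupies most of the paper and goes through the Dani correspondence and a delicate analysis on the space of lattices), nothing has been established.

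Two further points need attention even granting the counting lemma. First, $\mathcal{P}$ is an arbitrary family of Roy-systems, a priori uncountable, so $\mathcal{M}(\mathcal{P})=\bigcup_{\bP}S(\bP)$ cannot be handled by countable stability of $\dim_H$ or $\dim_P$ without first exhibiting a countable subfamily $\{\bP_i\}$ with $\mathcal{M}(\mathcal{P})\subseteq\bigcup_i S(\bP_i)$ and $\underline{\delta}(\bP_i)\le\sup_{\bP\in\mathcal{P}}\underline{\delta}(\bP)$ (and likewise for $\overline{\delta}$); your phrase ``finite perturbations plus finite precision'' gestures at the right discretization but does not supply it, and one must check that discretizing a template changes $\underline{\delta}$ and $\overline{\delta}$ by at most $o(1)$. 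Second, for the packing lower bound the local-dimension criterion requires the upper local dimension of $\mu$ to be at least $\overline{\delta}(\bP)$ on a set of positive measure, which forces the scales $Q_j$ in your Cantor construction to be chosen along a sequence realizing the $\limsup$ simultaneously for $\mu$-a.e.\ point; this is compatible with the construction but is exactly the kind of detail that must be written out. In short: correct skeleton, but the theorem is imported in the paper for good reason, and your proposal does not close the gap that makes it a theorem.
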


%%%%%%

\section{Proof of Theorems on dimensions of intersection of Diophantine sets}\label{proofs}

In this section, we prove the results concerning the Hausdorff and packing dimensions of Diophantine sets and their intersections. For this, we apply the \emph{variational principle}. %Looking for the best Roy-system candidate to get an optimal lower bound for the dimensions, the idea is to stay as close as possible to the situation $P_1= \cdots = P_n$.\\

\subsection{Idea of the proofs}

Consider $\mathcal{F}$ the set of Roy-systems at finite distance of the set $\left\{ \bL_{\btheta} \mid \btheta \in \bigcap_{d=0}^{n-1} \bfW_n^d({\omega}_{d})^* % \cap \hat{\bfW}_n^{n-1}(n)^*
 \right\}$. By the remark following Proposition \ref{prop}, it is closed under finite perturbation. The variational principle provides that:

\begin{eqnarray*}
\dim_H\left(  \bigcap_{d=0}^{n-1} \bfW_n^d({\omega}_{d})^*% \cap \hat{\bfW}_n^{n-1}(n)^*
  \right) &=& \sup_{\bP \in \mathcal{F} } \underline{\delta}(\bP),\\
 \dim_P\left( \bigcap_{d=0}^{n-1}  \bfW_n^d({\omega}_{d})^* %\cap \hat{\bfW}_n^{n-1}(n)^*
  \right) &=& \sup_{\bP \in \mathcal{F} } \overline{\delta}(\bP).
\end{eqnarray*}
Here, $\underline{\delta}(\bP)$ and $\overline{\delta}(\bP)$ are  the lower and upper average contraction rates, defined in \eqref{ud} and \eqref{od}.\\
 
Hence, we get a good lower bound for dimensions if we find Roy-systems $\bP \in \mathcal{F}$ with a high lower or upper average contraction rates. The idea is that the contraction rate is maximal and equal to $n$ if and only if the first component is growing, in particular when all the components are equal and growing with slope $\frac{1}{n+1}$. \\
 
 For upper bounds, we consider potential functions. Following techniques of \cite{VarPrinc}.\\
 
 \subsection{Proof of Theorem \ref{Hwd}}

To get the  best lower bound for the Hausdorff dimension of the Diophantine set, the idea is to have patterns insuring the requested Diophantine properties in between long intervals where all the components are equal and the local contraction rate maximal.\\
Fix the dimension $n\ge 2$, choose $0\le d \le n-1$ and $\infty > \omega_{d}\ge \frac{d+1}{n-d}$. Fix $1>\eps>0$, and define the sequence $(Q_k)_{k\ge0}$ by $Q_0=1$ and the recurrence
\[ Q_{2k+1} = \frac{(n-d) \omega_{d}}{d+1}Q_{2k} \textrm{ and } Q_{2k+2} = \frac{1}{\eps}Q_{2k+1}.\]
We define a Roy-system $\bP_\eps$ on $[1, \infty)$. On the intervals of the form $[Q_{2k+1}, Q_{2k+2}]$, $\bP_\eps$ is defined by $P_1(q)= \cdots = P_{n+1}(q)=\frac{q}{n+1}$ and the local contraction rate is maximal : $\delta([Q_{2k+1}, Q_{2k+2}])=n$. Note that if $\omega_{d}= \frac{d+1}{n-d}$, then $Q_{2k+1} = Q_{2k}$ for all $k$ and $\bP_\eps$ is defined by $P_1(q)= \cdots = P_{n+1}(q)=\frac{q}{n+1}$ on $[1, \infty)$.\\
On the intervals of the form $[Q_{2k},Q_{2k+1}]$, $\bP_\eps$ is defined by the following pattern (see Figure \ref{figC}).

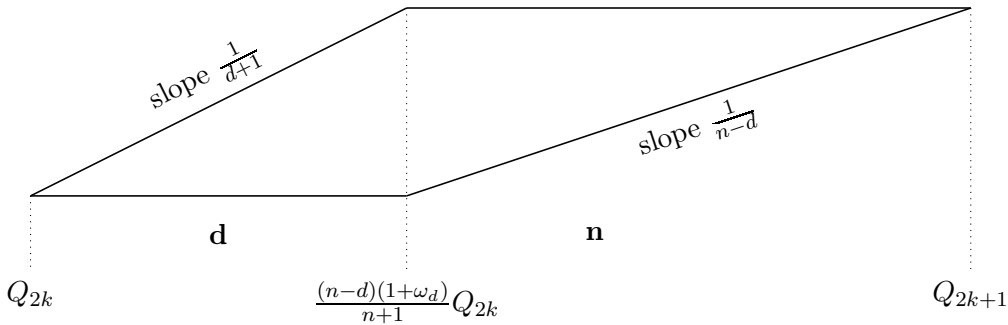
\begin{figure}[!h] 
 \begin{center}
 \begin{tikzpicture}[scale=0.5]
 
\draw[black, semithick] (10,5)--(0,0) node [midway, above, sloped] {slope $\frac{1}{d+1}$};
\draw[black, semithick] (10,0)--(0,0) ;
\draw[black, semithick] (10,5)--(25,5);
\draw[black, semithick] (10,0)--(25,5) node [midway, below, sloped] {slope $\frac{1}{n-d}$};

\draw[black, dotted] (10,5)--(10,-2)node [below,black] {$\frac{(n-d)(1+\omega_{d})}{n+1}Q_{2k}$};
\draw[black, dotted] (0,0)--(0,-2)node [below,black] {$Q_{2k}$};
\draw[black, dotted] (25,5)--(25,-2)node [below,black] {$Q_{2k+1}$};

\draw (5,-1) node {\textbf{d}};
\draw (15,-1) node {\textbf{n}};

 \end{tikzpicture}
 \end{center}
 \caption{Roy-system $\bP_\eps$ on the interval $[Q_{2k}, Q_{2k+1}]$ for the Hausdorff dimension of the Diophantine sets. Local contraction rates are in bold.  }\label{figC}
 \end{figure}

At $Q_{2k}$, we have $P_1(Q_{2k})= \cdots = P_{n+1}(Q_{2k})=\frac{Q_{2k}}{n+1}$.

Then, the $d+1$ components $P_{n-d+1}, \ldots, P_{n+1}$ grow together with slope $\frac{1}{d+1}$ until reaching the value $\frac{(n-d) \omega_d}{(d+1)(n+1)}$ at $q=\frac{(n-d)(1+\omega_{d})}{n+1}Q_{2k}$. On this interval of length $|I|=\frac{(n-d)(\omega_d+1)-(n-1)}{n+1}$, the contraction rate is $\delta(I)=d$.

Then, the $n-d$ components $P_1, \ldots , P_{n-d}$ grow together with slope $\frac{1}{n-d}$ until reaching the value $P_{d+1}= \cdots = P_{n+1}= \frac{(n-d) \omega_d}{(d+1)(n+1)}$ at $q=Q_{2k}\frac{(n-d) \omega_{d}}{d+1} = Q_{2k+1}$. On this interval the contraction rate is $\delta(I)=n$.\\

Since $P_1+ \cdots +P_{n-d}$ has slope $0$ on the intervals $[Q_{2k},\frac{(n-d)(1+\omega_{d})}{n+1}Q_{2k}]$ and $1$ otherwise, we have
\begin{eqnarray*}
\liminf_{q\to \infty} \frac{P_1(q) + \cdots + P_{n-d}(q)}{q} &=& \lim_{k\to \infty} \frac{P_1(\frac{(n-d)(1+\omega_{d})}{n+1}Q_{2k}) + \cdots + P_{n-d}(\frac{(n-d)(1+\omega_{d})}{n+1}Q_{2k})}{\frac{(n-d)(1+\omega_{d})}{n+1}Q_{2k}}\\
&=& \frac{1}{1+\omega_d}.
\end{eqnarray*}

Using Roy's theorem, $\bP_\eps$ provides points in $\bfW_n^d(\omega_d)$.\\

Since
\begin{eqnarray*}
\Delta(\bP_\eps,Q_{2k}) &=& \frac{n(Q_{2k}-Q_{2k-1})+(Q_{2k-1}-1)\Delta(\bP_\eps,Q_{2k-1})}{Q_{2k}-1}\\
 &=& \frac{n(1-\eps)Q_{2k}-(\eps Q_{2k}-1)\Delta(\bP_\eps,Q_{2k})}{Q_{2k}-1}\\
 &\sim_{k\to \infty}& n-\eps(n+ \Delta(\bP_\eps,Q_{2k-1})),
 \end{eqnarray*}
for every $1>\eps>0$, \[\overline{\delta}(\bP_\eps) \ge \limsup_{k\to \infty} \Delta(\bP_\eps, Q_{2k}) = n-\eps(n+\Delta(\bP_\eps,Q_{2k-1})) \to_{\eps\to 0} n.\] 
The local contraction rate is always bounded above by $n$, so we deduce the equality for the packing dimension.\\

The average contraction rate $\Delta(\bP_\eps,Q)$ reaches its local minima at $\frac{(n-d)(1+\omega_{d})}{n+1}Q_{2k}$. Indeed, the local contraction rate is $d$ on the intervals $[Q_{2k},\frac{(n-d)(1+\omega_{d})}{n+1}Q_{2k}]$ and $n$ otherwise. Hence,
\begin{eqnarray*}
 \liminf_{Q\to \infty} \Delta(\bP_\eps, Q) &=&  \liminf_{k\to \infty} \Delta(\bP_\eps, \frac{(n-d)(1+\omega_{d})}{n+1}Q_{2k})\\
 &=& \liminf_{k\to \infty}  \frac{(Q_{2k}-1)\Delta(\bP_\eps,Q_{2k}) + d(\frac{(n-d)(1+\omega_{d})}{n+1}Q_{2k} -Q_{2k})}{\frac{(n-d)(1+\omega_{d})}{n+1}Q_{2k}-1}\\
 &\sim_{\eps\to0}&  d +\frac{n+1}{1+\omega_d}.\\
\end{eqnarray*}
The lower bound for the Hausdorff dimension follows. If $\omega_d=\infty$, we replace the value of $\omega_d$ by $w_k$ in the construction of each interval $[Q_{2k},Q_{2k+1}]$, where $w_k\to_{k\to \infty}\infty$. All the computation holds, and we get the same lower bound for the Hausdorff dimension.\\

For the upper bound, we consider the potential function \[\Phi(q) =dq +(n+1)(P_1(q) + \cdots + P_{n-d}(q)).\]

\begin{lemma}\label{potential}
Let $I$ be an interval of linearity for $\bP$. Then for $q\in I$,
\[ \Phi'(q) \ge \delta(I).\]
\end{lemma}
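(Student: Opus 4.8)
The plan is to verify the inequality $\Phi'(q) \ge \delta(I)$ separately on each type of interval of linearity for the Roy-system $\bP$, by directly computing both sides. Recall $\Phi(q) = dq + (n+1)\bigl(P_1(q) + \cdots + P_{n-d}(q)\bigr)$, so $\Phi'(q) = d + (n+1)\sum_{j=1}^{n-d} P_j'(q)$. The key observation is that by the structure axiom (S2), on any interval $I$ of linearity there are integers $\underline r \le \bar r$ such that $P_{\underline r}, \ldots, P_{\bar r}$ have common slope $1/(\bar r - \underline r + 1)$ and all other components are constant; and the local contraction rate is $\delta(I) = n+1 - \underline r$, since $\kappa = \min\{k : P_k'(q) > 0\} = \underline r$.

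First I would write $\sum_{j=1}^{n-d} P_j'(q)$ as the number of indices $j$ with $1 \le j \le n-d$ and $\underline r \le j \le \bar r$, divided by $\bar r - \underline r + 1$; that is, $\sum_{j=1}^{n-d} P_j'(q) = \frac{\#\{j : \underline r \le j \le \min(\bar r, n-d)\}}{\bar r - \underline r + 1}$ (this count is $0$ if $\underline r > n-d$). So the inequality to prove becomes
\[
d + \frac{(n+1)\,\max\bigl(0, \min(\bar r, n-d) - \underline r + 1\bigr)}{\bar r - \underline r + 1} \;\ge\; n + 1 - \underline r.
\]
I would split into two cases. If $\underline r > n - d$, the left side is just $d$ and we need $d \ge n+1-\underline r$, i.e. $\underline r \ge n+1-d$, which holds since $\underline r \ge n-d+1$. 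If $\underline r \le n-d$, then the numerator is $(n+1)\min(\bar r - \underline r + 1, \; n-d-\underline r+1)$, and dividing by $\bar r - \underline r + 1 \ge 1$ the fraction is at least $\min\bigl(n+1, \frac{(n+1)(n-d-\underline r+1)}{\bar r - \underline r + 1}\bigr)$; since $\bar r \le n+1$ we have $\bar r - \underline r + 1 \le n+2-\underline r$, so the fraction is $\ge \frac{(n+1)(n-d-\underline r+1)}{n+2-\underline r}$. It then remains to check the elementary inequality $d + \frac{(n+1)(n-d-\underline r + 1)}{n+2-\underline r} \ge n+1-\underline r$ for $1 \le \underline r \le n-d$, which after clearing the positive denominator $n+2-\underline r$ reduces to a polynomial inequality in the integers $\underline r, d, n$ that I would expand and confirm is always satisfied (one checks it is in fact tight at the natural boundary cases, e.g. $\underline r = n-d$).

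The main obstacle, such as it is, is purely bookkeeping: correctly counting how many of the growing components $P_{\underline r}, \ldots, P_{\bar r}$ fall into the range $\{1, \ldots, n-d\}$ that enters $\Phi$, and handling the edge cases $\underline r > n-d$ (no contribution to the sum) versus $\bar r > n-d$ (only partial contribution) cleanly. Once the count is pinned down, each case is a one-line comparison of linear or rational expressions in $n, d, \underline r$. I would organize the write-up as the two cases above, doing the trivial case $\underline r > n-d$ first and then the case $\underline r \le n-d$ with the denominator bound $\bar r - \underline r + 1 \le n+2-\underline r$ and the final polynomial check.
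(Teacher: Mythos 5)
Your proposal is correct and follows essentially the same route as the paper: compute $\Phi'$ on an interval of linearity from axiom (S2), count how many of the growing components $P_{\underline r},\ldots,P_{\bar r}$ have index at most $n-d$, and finish with an elementary case analysis (the paper splits on whether $\bar r\le n-d$, you split on whether $\underline r\le n-d$ and then clear the denominator, but the content is the same). Your explicit treatment of the empty-sum case $\underline r>n-d$ is in fact slightly more careful than the paper's, and the final quadratic check does hold, factoring as $(m-(n+1))(m-(d+1))\le 0$ with $m=n+2-\underline r$; only your aside about tightness at $\underline r=n-d$ is off (equality occurs at $\underline r=1$), which does not affect the proof.
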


\begin{proof}[Proof of Lemma \ref{potential}]
Fix an interval of linearity of $\bP$. Consider $k$ to be the minimal index such that $P_k'>0$ on $I$, and $j$ the maximal index such that $P_k= \cdots = P_{k+j}$. We recall that by definition $\delta(I)=n-k+1$. With these definitions, the derivatives on $I$ are $P'_k= \cdots = P'_{k+j} = \frac{1}{j+1}$ and $P'_i=0$ otherwise. Hence,
\begin{eqnarray*}
\Phi'(q) &=& d +(n+1)(P'_1(q) + \cdots + P'_{n-d}(q)) = d +(n+1)(P'_k(q) + \cdots + P'_{\min(n-d,k+j)}(q))\\
 &=&d +\frac{n+1}{j+1}(\min(n-d,k+j)-k+1)
\end{eqnarray*}
If $j\ge n-d-k$, then $\min(d,k+j)-k+1=n-d-k+1$ and as $j\le n$ we deduce
\[\Phi'(q) = d + \frac{n+1}{j+1}(n-d-k+1) \ge d+(n-d-k+1) \ge \delta(I).\]
If $j< n-d-k$, then $\min(d,k+j)-k+1=j+1$ and 
 \[\Phi'(q) = d + \frac{n+1}{j+1}(j+1) = d+n+1 \ge n \ge \delta(I).\]
\end{proof}

Integrating the inequality of Lemma \ref{potential} gives 
\[ dq +(n+1)(P_1(q) + \cdots + P_{n-d}(q)) \ge q \Delta(q).\]
Dividing by $q$ and then taking the $\liminf_{q\to\infty}$ gives
\[ \underline{\delta}(\bP)\le d + \frac{n+1}{1+\omega_{d}}.\]
This finish the proof of Theorem \ref{Hwd}.\qed

\subsection{Proofs of Theorems \ref{Hcap} and \ref{Pcap} }
 
 Fix $n\ge 2$, and choose $\omega_0, \ldots \omega_{n-1}$ satisfying the going up and going down inequalities \eqref{gugd}.\\
 
 Define $a_1=\frac{1}{1+\omega_{n-1}}, \quad a_d = \frac{1}{1+\omega_{n-d}} - \frac{1}{1+\omega_{n-d+1}}, \quad a_{n+1}=1 - \sum_{i=1}^{n}a_n = \frac{\omega_0}{1+\omega_0}$, where $1/\infty=0$. It follows from \eqref{gugd} that $a_d$ is increasing with $d$. See also \cite{Royspec}. Assume first that $\omega_{n-1}\ne \infty$ and thus $a_1>0$, we treat this pathological case later.\\
 
 For $1>\eps > 0$, we choose $Q_0=1$, and define the sequence $(Q_k)_{k\ge0}$ by $Q_0=1$ and the recurrence 
\[ Q_{2k+1} = \frac{a_{n+1}}{a_0}Q_{2k} \textrm{ and } Q_{2k+2} = \frac{1}{\eps}Q_{2k+1}.\]

Define $q_j$ for $0\le j \le 2n$ by
\begin{eqnarray*}
q_d &=& a_1 + \cdots + a_d + (n+1-d)a_{k+1} \quad \textrm{ for } 0\leq d \leq n,\\
q_{n+d} &=& a_{n+1} + \cdots + a_{d+2} + (1+d)a_{d+1}  \quad \textrm{ for } 0\leq d \leq n.
\end{eqnarray*}
Note that $q_n = a_1 + \cdots + a_{n+1}=1$.\\
To simplify formulae, we set $\alpha_i=\frac{a_i}{q_0}$ for $1\le i \le n+1$ and $\rho_i=\frac{q_i}{q_0}$ for $0\le i \le 2n$. Note that $\rho_0=1$.\\

We define a Roy-system $\bP_\eps$ on $[Q_0,\infty)$, by defining it on each subinterval $[Q_k,Q_{k+1}]$. On the intervals of the form $[Q_{2k+1},Q_{2k+2}]$, we set $\bP_\eps$ by $P_1(q) = \cdots = P_{n+1}(q) = \frac{q}{n+1}$, where the local contraction rate is maximal : $\delta([Q_{2k+1},Q_{2k+2}])=n$. Note that if $\omega_{n-1}=n$, then $a_1 = \cdots = a_{n+1}= \frac{1}{n+1}$, and for all $k$ we have $Q_{2k}=Q_{2k+1}$, meaning that $\bP_\eps$ is defined by $P_1(q) = \cdots = P_{n+1}(q) = \frac{q}{n+1}$ on $[Q_0,\infty)$. \\

On the intervals of the form $[Q_{2k},Q_{2k+1}]$, we construct a pattern illustrated by Figure \ref{figD}.\\ 

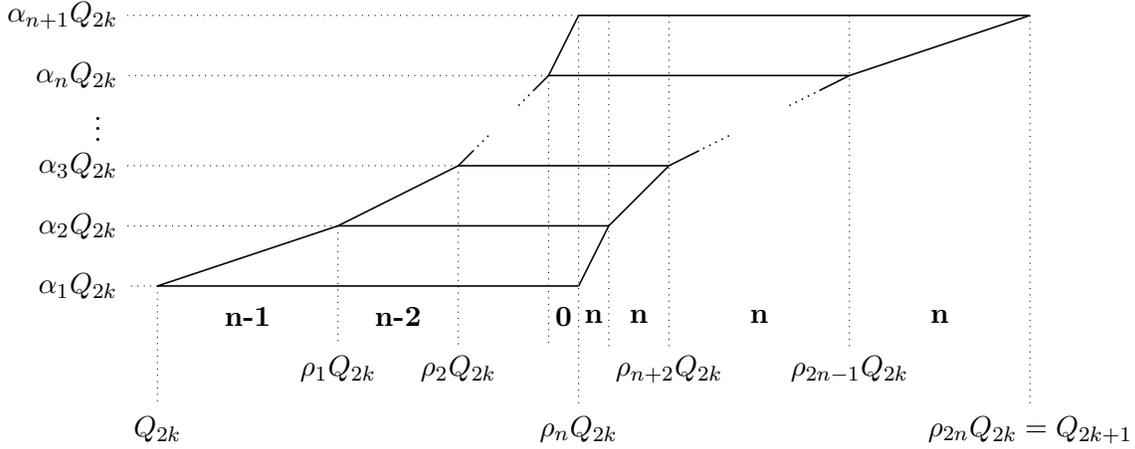
\begin{figure}[!h] 
 \begin{center}
 \begin{tikzpicture}[scale=0.4]
 
\draw[black, semithick] (6,2)--(0,0) node [midway, above, sloped] {};
\draw[black, semithick] (6,2)--(10,4);
\draw[black, semithick] (6,2)--(15,2);
\draw[black, semithick] (14,0)--(0,0);
\draw[black, semithick] (13,7)--(14,9);
\draw[black, semithick] (14,0)--(15,2);
\draw[black, semithick] (15,2)--(17,4);
\draw[black, semithick] (10,4)--(17,4);
\draw[black, semithick] (13,7)--(23,7);
\draw[black, semithick] (23,7)--(29,9);
\draw[black, semithick] (14,9)--(29,9);

\draw[black, semithick] (10,4)--(10.5,4.5);
\draw[black, dotted, semithick] (11,5)--(10.5,4.5);
\draw[black, dotted, semithick] (12,6)--(12.5,6.5);
\draw[black, semithick] (13,7)--(12.5,6.5);

\draw[black, semithick] (17,4)--(18,4.5);
\draw[black, dotted, semithick] (19,5)--(18,4.5);
\draw[black, dotted, semithick] (21,6)--(22,6.5);
\draw[black, semithick] (23,7)--(22,6.5);

\draw[black, dotted] (0,0)--(0,-4)node [below,black] {$Q_{2k}$};
\draw[black, dotted] (6,2)--(6,-2)node [below,black] {$\rho_1Q_{2k}$};
\draw[black, dotted] (10,4)--(10,-2)node [below,black] {$\rho_2Q_{2k}$};
\draw[black, dotted] (13,7)--(13,-2); %node [below,black] {$\rho_{n-1}Q_{2k}$};
\draw[black, dotted] (14,9)--(14,-4)node [below,black] {$\rho_nQ_{2k}$};
\draw[black, dotted] (15,9)--(15,-2);% node [below,black] {$\rho_{n+1}Q_{2k}$};
\draw[black, dotted] (17,9)--(17,-2)node [below,black] {$\rho_{n+2}Q_{2k}$};
\draw[black, dotted] (23,9)--(23,-2)node [below,black] {$\rho_{2n-1}Q_{2k}$};
\draw[black, dotted] (29,9)--(29,-4)node [below,black] {$\rho_{2n}Q_{2k}=Q_{2k+1}$};

\draw[black, dotted] (14,9)--(-1,9)node [left,black] {$\alpha_{n+1}Q_{2k}$};
\draw[black, dotted] (13,7)--(-1,7)node [left,black] {$\alpha_{n}Q_{2k}$};
\draw[black, dotted] (10,4)--(-1,4)node [left,black] {$\alpha_{3}Q_{2k}$};
\draw[black, dotted] (6,2)--(-1,2)node [left,black] {$\alpha_{2}Q_{2k}$};
\draw[black, dotted] (0,0)--(-1,0)node [left,black] {$\alpha_{1}Q_{2k}$};
\draw (-1.5,5.5) node  [left] {$\vdots$};

\draw (3,-1) node {\textbf{n-1}};
\draw (8,-1) node {\textbf{n-2}};
\draw (13.5,-1) node {\textbf{0}};
\draw (14.5,-1) node {\textbf{n}};
\draw (16,-1) node {\textbf{n}};
\draw (20,-1) node {\textbf{n}};
\draw (26,-1) node {\textbf{n}};

 \end{tikzpicture}
 \end{center}
 \caption{Roy-system $\bP_\eps$ on the interval $[Q_{2k}, Q_{2k+1}]$ for the Hausdorff dimension of intersections of Diophantine sets. Local contraction rates are in bold. }\label{figD}
 \end{figure}

We start with $P_1(Q_{2k}) = \cdots = P_{n+1}(Q_{2k}) = \frac{Q_{2k}}{n+1}=\alpha_1Q_{2k}$.\\
Then, for $0\le d \le n$, on the interval $[\rho_dQ_{2k}, \rho_{d+1}Q_{2k} ]$ the $(n-d)$ components $P_{d+2}, \ldots , P_{n+1}$ grow together with slope $\frac{1}{n-d}$. On this interval of derivability of length $|I_d| = (\rho_{d+1}-\rho_{d})Q_{2k}$, the contraction rate is $\delta(I_d)=n-1-d$. At $q=\rho_{d+1}Q_{2k}$, we have
\begin{eqnarray*}
P_i(\rho_{d+1}Q_{2k}) &=& \alpha_i Q_{2k}, \textrm { for } 1\le i \le d ,\\
P_{d+1}(\rho_{d+1}Q_{2k}) &=& \cdots \; = P_{n+1}(\rho_{d+1}Q_{2k}) = \alpha_dQ_{2k}.
\end{eqnarray*}
Then, for $0\le d \le n$, on the interval $[\rho_{n+d}Q_{2k}, \rho_{n+d+1}Q_{2k} ]$, the $d+1$ components $P_1, \ldots , P_{d+1}$ grow together with slope $\frac{1}{d+1}$. On this interval of derivability of length $|I'_d| = (\rho_{n+d+1}-\rho_{n+d})Q_{2k}$, the contraction rate is $\delta(I'_d)=n$. At $q=\rho_{n+d+1}Q_{2k}$, we have
\begin{eqnarray*}
P_i(\rho_{n+d+1}Q_{2k}) &=& \alpha_i Q_{2k}, \textrm { for } d\le i \le n+1, \\
P_{1}(\rho_{n+d+1}Q_{2k}) &=& \cdots \; = P_{d+1}(\rho_{n+d+1}Q_{2k}) = \alpha_{d+1}Q_{2k}.
\end{eqnarray*}

This satisfies the conditions of a Roy-system.\\

Here, the main pattern is appearing with low frequency on the intervals $[Q_{2k},Q_{2k+1}]$ and is essentially the one constructed by Roy in \cite{Royspec}. He showed in detail that
\begin{eqnarray*}
\liminf_{q\to \infty} \frac{P_1(q) + \cdots + P_{d}(q)}{q}&=& \lim_{k\to \infty}\frac{P_1(\rho_nQ_{2k}) + \cdots + P_{d}(\rho_nQ_{2k})}{\rho_nQ_{2k}} = \frac{a_1 + \cdots + a_d}{q_n} = \frac{1}{1+\omega_{n-d}},\\
\limsup_{q\to \infty} \frac{P_1(q) + \cdots + P_{d}(q)}{q} &=& \frac{d}{n+1}.
\end{eqnarray*}
Hence, the points $\btheta$ associated to $\bP$ by Roy's theorem are in the set $\bigcap_{d=0}^{n-1} \bfW_n^d(\omega_{d})^*% \cap \hat{\bfW}_n^{n-1}(n)
$, and the upper (resp. lower) average contraction rate of $\bP_\eps$ is a lower bound of the Hausdorff (resp. packing) dimension of this set.\\

%Recall that $\hat{\bfW}_n^{n-1}(n) = \hat{\bfW}_n^{d}(\frac{n-d+1}{d+1})$ for every $0\le d \le n-1$.\\

First, we study the upper average contraction rate, and thus the packing dimension.

\begin{eqnarray*}
\Delta(\bP_\eps,Q_{2k}) &=& \frac{n(Q_{2k}-Q_{2k-1})+(Q_{2k-1}-1)\Delta(\bP_\eps,Q_{2k-1})}{Q_{2k}-1}\\
 &=& \frac{n(1-\eps)Q_{2k}-(\eps Q_{2k}-1)\Delta(\bP_\eps,Q_{2k})}{Q_{2k}-1}\\
 &\sim_{k\to \infty}& n -\eps(n+\Delta(\bP_\eps,Q_{2k-1})).
 \end{eqnarray*}
 Hence for every $1>\eps>0$, \[\overline{\delta}(\bP_\eps) \ge \limsup_{k\to \infty} \Delta(\bP_\eps, Q_{2k}) = n -\eps(n+\Delta(\bP_\eps,Q_{2k-1})) \to_{\eps\to 0} n.\] Since the local contraction rate is always bounded above by $n$, we deduce that the packing dimension is full. This proves Theorem \ref{Pcap}.\qed\\
 
 We now study the lower average contraction rate, and thus the Hausdorff dimension.\\
 
 The average contraction rate $\Delta(\bP_\eps,t)$ reaches its local minima at $\rho_nQ_{2k}$, provided $\eps \le \frac{1}{2n}$. Indeed, starting with the estimate of $\Delta(\bP_\eps,Q_{2k})$, one can prove that $\Delta(\bP_\eps,q)$ is decreasing on $[Q_{2k},\rho_nQ_{2k}]$, as the local contraction rate stays lower than its value. Thus,
 \begin{eqnarray*}
\underline{\delta}(\bP_\eps)&=& \liminf_{t\to \infty} \Delta(\bP_\eps,t) = \liminf_{k\to\infty} \Delta(\bP_\eps, \rho_nQ_{2k})\\
&=& \frac{\rho_0Q_{2k} \Delta(\bP_\eps,Q_{2k}) + \sum_{d=0}^{n}(n-d-1)(\rho_{d+1}-\rho_d)Q_{2k} }{\rho_n Q_{2k}}\\
&\sim_{k\to \infty, \eps\to 0}& \frac{q_0+ \cdots + q_{n-1}}{q_n} = 2\sum_{d=1}^{n}(n+1-i)a_i = 2 \sum_{d=1}^{n} \sum_{j=1}^{d}a_j = 2\sum_{d=0}^{n-1} \frac{1}{1+\omega_d}.
 \end{eqnarray*}
 Hence the expected lower bound for the Hausdorff dimension.\\
 
 We now deal with the pathological case, based on a light adjustment of the previous construction. The main idea is that the parameters $(a_d)_{d=1}^{n+1}$ do not have to be constant, and can vary with $k$. The pathology is when $\omega_{n-1}=\infty$, then we would have $a_1=0$ and our construction collapse. In this case, we replace $a_1$ by a sequence $(a_{1,k})_{k\ge0}$ that strictly decreases to $a_{1,k}\to0$. Then for $2\le d \le n+1$ we set $a_{d,k}=\max(a_d, a_{d-1,k})$. Unfortunately, they do not sum to $1$, so we need to renormalize. We define
\[\tilde{a}_{d,k} = \frac{a_{d,k}}{a_{1,k} + \cdots + a_{n+1,k}} \to_{k\to \infty} a_d.\]
We now consider the sequence $(Q_{k})_{k\ge 0}$ defined by $Q_0 =1$ and 
\[ Q_{2k+1} = \frac{\tilde{a}_{n+1,k}}{\tilde{a}_{0,k}}Q_{2k} \textrm{ and } Q_{2k+2} = \frac{1}{\eps}Q_{2k+1}.\]
Mutatis mutandis $a_d$ by $\tilde{a}_{d,k}$, we follow the previous construction on the intervals $[Q_k,Q_{k+1}]$. Previous computations holds, and the same conclusion follows.\\

 We now need an upper bound for the Hausdorff dimension. We consider the potential function
 \[ \Phi(q) = 2\sum_{d=1}^{n} \sum_{j=1}^{d}P_j(q)=2\sum_{d=1}^{n} (n+1-j) P_j(q).\]
 
 \begin{lemma}\label{potential3}
Let $I$ be an interval of linearity for $\bP$. Then for $q\in I$, 
\[ \Phi'(q) \ge \delta(I).\]
\end{lemma}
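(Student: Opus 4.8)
The plan is to mimic the proof of Lemma \ref{potential} almost verbatim, since the potential function $\Phi(q) = 2\sum_{d=1}^{n}\sum_{j=1}^{d}P_j(q) = 2\sum_{j=1}^{n}(n+1-j)P_j(q)$ is a weighted sum of the first $n$ components with strictly decreasing positive weights $2(n+1-j)$ for $1\le j \le n$. Fix an interval of linearity $I$ of $\bP$; let $k = \min\{i : P_i' > 0 \text{ on } I\}$ and let $j$ be maximal with $P_k = P_{k+1} = \cdots = P_{k+j}$ on $I$, so that $\delta(I) = n+1-k$ and the derivatives on $I$ are $P_k' = \cdots = P_{k+j}' = \frac{1}{j+1}$ with all other $P_i' = 0$. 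The goal is the pointwise bound $\Phi'(q) \ge n+1-k$.

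Computing directly,
\[
\Phi'(q) = 2\sum_{i=k}^{\min(n,\,k+j)} (n+1-i)\cdot\frac{1}{j+1}.
\]
First I would treat the case $j \ge n-k$, i.e. the block of equal growing components reaches past index $n$. Then the sum runs over $k \le i \le n$, giving
\[
\Phi'(q) = \frac{2}{j+1}\sum_{i=k}^{n}(n+1-i) = \frac{2}{j+1}\cdot\frac{(n+1-k)(n+2-k)}{2} = \frac{(n+1-k)(n+2-k)}{j+1}.
\]
Since $j \le n$ (a block cannot have more than $n+1$ components, and here we also have $k+j$ could be $n+1$, so $j \le n+1-k$ in this subcase), one checks $j+1 \le n+2-k$, hence $\Phi'(q) \ge n+1-k = \delta(I)$. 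In the complementary case $j < n-k$, the sum runs over the full block $k \le i \le k+j$:
\[
\Phi'(q) = \frac{2}{j+1}\sum_{i=k}^{k+j}(n+1-i) = \frac{2}{j+1}\cdot\frac{(j+1)(2n+2-2k-j)}{2} = 2n+2-2k-j \ge 2n+2-2k-(n-k) = n+2-k > n+1-k,
\]
using $j \le n-k-1$ in the last inequality but even just $j \le n-k$ suffices to get $\ge n+1-k$. Either way $\Phi'(q) \ge \delta(I)$.

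The only genuine point requiring care — and the main (mild) obstacle — is pinning down the correct upper bound on $j$ in each regime, since a Roy-system block at the top of the diagram can involve $P_{n+1}$ too, so one must distinguish whether $k+j \le n$ or $k+j = n+1$; but in all cases $j \le n+1-k$, and the two displayed computations show the bound holds. I would then remark, exactly as after Lemma \ref{potential}, that integrating $\Phi'(q) \ge \delta(\bP,q)$ over $[Q_0,q]$, dividing by $q$, taking $\liminf_{q\to\infty}$, and using $\liminf_{q\to\infty} \frac{P_1(q)+\cdots+P_d(q)}{q} = \frac{1}{1+\omega_{n-d}}$ from Roy's analysis, yields
\[
\underline{\delta}(\bP) \le 2\sum_{d=1}^{n}\frac{1}{1+\omega_{n-d}} = 2\sum_{d=0}^{n-1}\frac{1}{1+\omega_d},
\]
which matches the lower bound already constructed and completes the proof of Theorem \ref{Hcap}.
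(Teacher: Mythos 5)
Your proof is correct and follows essentially the same route as the paper's: compute $\Phi'$ on the interval of linearity from the block of $j+1$ components growing with slope $\frac{1}{j+1}$, and bound the resulting weighted sum using $j\le n+1-k=\delta(I)$. Your explicit case split on whether the growing block reaches $P_{n+1}$ (which carries weight $0$ in $\Phi$) is in fact slightly more careful than the paper's single computation, and correctly locates the equality case $\Phi'(q)=\delta(I)$ at $k+j=n+1$.
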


\begin{proof}[Proof of Lemma \ref{potential3}]
Fix an interval of linearity of $\bP$. Consider $k$ to be the minimal index such that $P_k'>0$ on $I$, and $j$ the maximal index such that $P_k= \cdots = P_{k+j}$. We recall that by definition $\delta(I)=n-k+1$.\\
With these definitions, the derivatives on $I$ are  $P_k'= \cdots = P'_{k+j}= \frac{1}{j+1}$ and $P'_i=0$ otherwise. Hence,
\begin{eqnarray*}
\Phi'(q) &=& 2\sum_{d=1}^{n} (n+1-j) P'_j(q) \\
  &=& \frac{2}{j+1}\sum_{i=1}^{j} (n +1 -k -j+i)\\
  &=&\frac{2}{j+1}\left( (j+1)(n+1-k-j) + \sum_{i=1}^{j}i\right)\\
  &=& 2\left((n+1-k-j) +\frac{j}{2} \right) \ge \delta(I),
\end{eqnarray*}
since $j \le n+1-k =\delta(I)$.
\end{proof}

Integrating the inequality of Lemma \ref{potential3} gives 
\[ 2\sum_{d=1}^{n} \sum_{j=1}^{d}P_j(q) \ge q \Delta(q).\]
Dividing by $q$ and then taking the $\liminf_{q\to\infty}$ gives
\[ \underline{\delta}(\bP)\le 2\sum_{d=0}^{n-1} \frac{1}{1+\omega_d}.\]
This provides the upper bound for the Hausdorff dimension. Since the bound is a decreasing function in term of $\omega_d$, Theorem \ref{Hcap} follows.\qed

\begin{remark}If we look carefully at the constructions providing lower bounds, we have the extra condition that $\hat{\omega}_{n-1}=n$. Hence, we proved the slightly stronger lower bounds 
\begin{eqnarray*}
\dim_P\left( \bigcap_{d=0}^{n-1} \bfW_n^d({\omega}_{d})^* \cap \hat{\bfW}_n^{n-1}(n)^*  \right) & \ge& n,\\
\dim_H\left( \bigcap_{d=0}^{n-1} \bfW_n^d({\omega}_{d})^* \cap \hat{\bfW}_n^{n-1}(n)^*  \right)  &\ge& 2\sum_{d=0}^{n-1} \frac{1}{1+\omega_d}.
\end{eqnarray*}

Here, $\hat{\bfW}_n^d(\hat{\omega}_{d})^*=\{\btheta\in \RR^n | \hat{\omega}_{d}(\btheta)=\hat{\omega}_{d}\}$ is a uniform Diophantine set.\\

Since the bound is a decreasing function in term of $\omega_d$, we have the slightly stronger 

\begin{theorem}
Fix the dimension $n\ge 2$, choose a $n$-tuple of real numbers $\omega_{0}, \ldots , \omega_{n-1}$ satisfying the \emph{going-up} and \emph{going down} relations \eqref{gugd}. Then
\begin{eqnarray*}
\dim_P\left( \bigcap_{d=0}^{n-1} \bfW_n^d({\omega}_{d})^* \cap \hat{\bfW}_n^{n-1}(n)^*  \right) &=& \dim_P\left( \bigcap_{d=0}^{n-1} \bfW_n^d({\omega}_{d}) \cap \hat{\bfW}_n^{n-1}(n)^* \right) \\
= \dim_P\left( \bigcap_{d=0}^{n-1} \bfW_n^d({\omega}_{d})^*\right) &=& \dim_P\left( \bigcap_{d=0}^{n-1} \bfW_n^d({\omega}_{d}) \right) =n
\end{eqnarray*}
and
\begin{eqnarray*}
\dim_H\left( \bigcap_{d=0}^{n-1} \bfW_n^d({\omega}_{d})^* \cap \hat{\bfW}_n^{n-1}(n)^*  \right) &=& \dim_H\left( \bigcap_{d=0}^{n-1} \bfW_n^d({\omega}_{d}) \cap \hat{\bfW}_n^{n-1}(n)^* \right) \\ \notag
= \dim_H\left( \bigcap_{d=0}^{n-1} \bfW_n^d({\omega}_{d})^*\right) &=& \dim_H\left( \bigcap_{d=0}^{n-1} \bfW_n^d({\omega}_{d}) \right)\\
&=& 2\left( \sum_{d=0}^{n-1 }\frac{1}{1+\omega_d}\right).
\end{eqnarray*}

\end{theorem}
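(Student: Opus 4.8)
The plan is to deduce this strengthening directly from Theorems \ref{Pcap} and \ref{Hcap} together with the parenthetical observation recorded just above the statement: the Roy-systems $\bP_\eps$ already built in the proofs of those theorems parametrise points that lie in the \emph{smaller} set on the left, namely those with $\hat{\omega}_{n-1}=n$. Once this is checked, the lower bounds transfer for free, and the upper bounds and the equalities among the six sets follow from monotonicity of dimension applied to an elementary chain of inclusions.

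First I would revisit the system $\bP_\eps$ from the proof of Theorems \ref{Pcap} and \ref{Hcap}. On every interval of the form $[Q_{2k+1},Q_{2k+2}]$ one has $P_1(q)=\cdots=P_{n+1}(q)=\tfrac{q}{n+1}$, and since $Q_{2k+2}=\tfrac1\eps Q_{2k+1}$ while the $Q_k$ grow at least geometrically, these diagonal stretches are arbitrarily long on a multiplicative scale; as $P_1(Q_{2k+1})=\tfrac{Q_{2k+1}}{n+1}$ by construction, the ratio $P_1(q)/q$ attains the value $\tfrac1{n+1}$ on a sequence of parameters tending to infinity. On the other hand property (S1) forces $P_1(q)\le\tfrac{q}{n+1}$ for every $q$. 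Hence $\limsup_{q\to\infty}P_1(q)/q=\tfrac1{n+1}$, and Proposition \ref{prop} applied with $k=1$ shows that the point $\btheta$ attached to $\bP_\eps$ by Theorem \ref{DR} satisfies $\hat{\omega}_{n-1}(\btheta)=n$, i.e. $\btheta\in\hat{\bfW}_n^{n-1}(n)^*$. Combined with the membership $\btheta\in\bigcap_{d=0}^{n-1}\bfW_n^d(\omega_d)^*$ already established there, this shows that each $\bP_\eps$ lies in the family $\mathcal{F}'$ of Roy-systems at finite distance from $\{\bL_\btheta\mid\btheta\in\bigcap_{d=0}^{n-1}\bfW_n^d(\omega_d)^*\cap\hat{\bfW}_n^{n-1}(n)^*\}$. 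Exactly as in the ``Idea of the proofs'', the remark following Proposition \ref{prop} (that all the exponents involved, including the uniform ones, are invariant under bounded perturbation of $\bL_\btheta$) gives both that $\mathcal{F}'$ is closed under finite perturbation and that $\mathcal{M}(\mathcal{F}')=\bigcap_{d=0}^{n-1}\bfW_n^d(\omega_d)^*\cap\hat{\bfW}_n^{n-1}(n)^*$; the same holds verbatim in the pathological case $\omega_{n-1}=\infty$, since the intervals $[Q_{2k+1},Q_{2k+2}]$ still carry the diagonal pattern of slope $\tfrac1{n+1}$ there.

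Next I would apply the variational principle to $\mathcal{F}'$ and reuse, unchanged, the estimates $\overline{\delta}(\bP_\eps)\to n$ and $\underline{\delta}(\bP_\eps)\to 2\sum_{d=0}^{n-1}\tfrac1{1+\omega_d}$ as $\eps\to0$ obtained in the proofs of Theorems \ref{Pcap} and \ref{Hcap}. This yields
\[ \dim_P\Big(\bigcap_{d=0}^{n-1}\bfW_n^d(\omega_d)^*\cap\hat{\bfW}_n^{n-1}(n)^*\Big)\ \ge\ n, \]
\[ \dim_H\Big(\bigcap_{d=0}^{n-1}\bfW_n^d(\omega_d)^*\cap\hat{\bfW}_n^{n-1}(n)^*\Big)\ \ge\ 2\sum_{d=0}^{n-1}\frac1{1+\omega_d}. \]
To close the argument, put $A=\bigcap_{d=0}^{n-1}\bfW_n^d(\omega_d)$, $A^*=\bigcap_{d=0}^{n-1}\bfW_n^d(\omega_d)^*$ and $\hat{W}=\hat{\bfW}_n^{n-1}(n)^*$, so that
\[ A^*\cap\hat{W}\ \subseteq\ A^*\ \subseteq\ A \qquad\text{and}\qquad A^*\cap\hat{W}\ \subseteq\ A\cap\hat{W}\ \subseteq\ A. \]
Both Hausdorff and packing dimension are monotone under inclusion; since $\dim_P(A)=n$ by Theorem \ref{Pcap} and $\dim_H(A)=2\sum_{d=0}^{n-1}\tfrac1{1+\omega_d}$ by Theorem \ref{Hcap}, while the left ends $A^*\cap\hat{W}$ are bounded below by the same quantities, all four sets in each chain share the same Hausdorff dimension and the same packing dimension, which is precisely the list of equalities in the statement.

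I do not expect any genuine obstacle beyond Theorems \ref{Pcap} and \ref{Hcap} themselves: the only point requiring care is the verification that the systems $\bP_\eps$ already realise $\hat{\omega}_{n-1}=n$, and this is immediate from combining property (S1) with the presence of the long diagonal stretches $[Q_{2k+1},Q_{2k+2}]$; everything else is the variational principle, already invoked, and elementary set inclusions.
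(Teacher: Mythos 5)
Your proposal is correct and takes essentially the same route as the paper: the author likewise observes that the Roy-systems $\bP_\eps$ built for Theorems \ref{Pcap} and \ref{Hcap} already satisfy $\limsup_{q\to\infty}P_1(q)/q=\tfrac{1}{n+1}$ (hence $\hat{\omega}_{n-1}=n$ by Proposition \ref{prop}), so the lower bounds hold for the smaller intersected sets, and the upper bounds transfer by monotonicity of dimension under inclusion. Your write-up is in fact slightly more explicit than the paper's about the chain of inclusions and the role of (S1) in pinning down the $\limsup$.
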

\end{remark}

\end{document}